\newcommand{\ip}[2]{\langle #1 , #2 \rangle}    
\newcommand{\Stab}{\operatorname{Stab}}
\newcommand{\B}{{\mathcal B}}
\newcommand{\id}{\operatorname{id}}
\newcommand{\smat}[1]{\left[\begin{smallmatrix}
 #1 \end{smallmatrix}\right]}
\newcommand{\cK}{\operatorname{\mathcal{K}}}
\newcommand{\cR}{\operatorname{\mathcal{R}}}
\newcommand{\cS}{\operatorname{\mathcal{S}}}
\newcommand{\rank}{\operatorname{rank}}
\newcommand{\Sym}{\mathbb S}
\newcommand\N{{\mathbb N}}
\newcommand\R{{\mathbb R}}
\newcommand{\tr}{^{\top}}
\newtheorem{theorem}{Theorem}
\newtheorem{claim}{Claim}
\newtheorem{lemma}{Lemma}
\newtheorem{proposition}{Proposition}
\newtheorem{definition}{Definition}
\newtheorem{remark}{Remark}
\newcommand{\interior}{\operatorname{int}}
\newcommand{\lrp}[1]{\left ( {#1} \right )}
\newcommand{\lrc}[1]{\left \{ {#1} \right \}}
\newcommand{\orb}{{\operatorname{\mathcal{O}}}}
\newcommand{\cyl}[2]{_{#1\times} {#2}}
\author{  Olga Kuryatnikova  \thanks{ \href{mailto:o.kuryatnikova@uvt.nl}{o.kuryatnikova@uvt.nl} (corresponding author),
ORCID:  0000-0001-8460-7296} \quad and Juan C. Vera \thanks{\href{mailto:j.c.veralizcano@uvt.nl}{j.c.veralizcano@uvt.nl}}
 \and
\small{ Department of Econometrics and Operations Research, Tilburg University, 5037 AB Tilburg, Netherlands}
}
\title{Generalizations of Schoenberg's theorem on positive definite kernels}
\begin{document}

\maketitle

\begin{abstract}
The seminal theorem of I.J. Schoenberg characterizes positive definite (p.d.) kernels on the unit sphere $S^{n-1}$ invariant under the automorphisms of the sphere. We obtain two generalizations of this theorem for p.d. kernels on fiber bundles. Our first theorem characterizes invariant p.d. kernels on bundles whose fiber is a product of a compact set and the unit sphere. This result implies, in particular, a characterization of invariant under the automorphisms of the sphere  p.d. kernels on a product of $S^{n-1}$ and a compact set. Our second result characterizes invariant p.d. kernels on the  bundle  whose fiber is $S^{n-1}$, base space is $(S^{n-1})^{r}$ and map is the projection on the base space. This set of kernels is isomorphic to the set  of invariant under the automorphisms of the sphere  continuous functions $F$ on $(S^{n-1})^{r+2}$ such that $F(\cdot,\cdot ,Z)$ is positive definite for every $Z \in (S^{n-1})^{r}$. When $Z$ is fixed, this class reduces to the class of p.d. kernels invariant under the stabilizer of $Z$ in the automorphism group of the sphere. For $r=1$ these kernels have been used to obtain upper bounds for the spherical codes problem. Our extension for $r>1$ can be used to construct new upper bounds on the size of spherical codes. \\ 

\noindent Keywords: Schoenberg's theorem; positive definite kernels, spherical codes

\end{abstract}

\section{Introduction} \label{sec:introKissing2}
The seminal theorem of I.J. Schoenberg~\cite{Schoenberg} characterizes positive  definite  kernels on the unit sphere $S^{n-1}$ invariant under the automorphisms of the sphere. A kernel on $S^{n-1}$ is a continuous function $K:S^{n-1}{\times}S^{n-1}\rightarrow\R$ such that  $F(x,y){=}F(y,x)$ for all $x,y\in S^{n-1}$. A kernel is positive definite (p.d.) if for any finite $U \subset S^{n-1}$ the restriction of $K$ to $U\times U$ is a positive semi-definite matrix. Let $O_n$ be the orthogonal group in dimension $n$, which is the automorphism group of $S^{n-1}$. Schoenberg~\cite{Schoenberg} proved that $K$ is a positive definite kernel  on $S^{n-1}$ invariant under the action of $O_n$ if and only if
\begin{align*}
K(x,y)=\sum_{i  \in \N} c_i P^{\tfrac{n}{2}{-}1}_i  (x \tr y ), \ c_i \ge 0,
\end{align*}
where $P^{\tfrac{n}{2}{-}1}_i$ is a Gegenbauer polynomial  of order $\tfrac{n}{2}{-}1$ and degree $i$.

In this paper we provide two generalizations of the above result to positive definite kernels on fiber bundles. Theorem~\ref{thm:leftFixed} characterizes p.d. kernels on bundles whose fiber is a product of a compact set and the unit sphere. In particular,  for any compact $V\subseteq \R^r$, we obtain a characterization of p.d. kernels on $V \times S^{n-1}$ which are invariant under the action of $O_n$ when the arguments from $V$ are fixed. Next, Theorem~\ref{thm:PSD_multivKissing2}  characterizes the class of continuous functions $F(x,y,Z)$ on $(S^{n-1})^{r+2}$ invariant under $O_n$ such that $F(\cdot,\cdot ,Z)$ is a p.d. kernel for every $Z\in (S^{n-1})^r$. This class of functions can as well be viewed as kernels on a fiber bundle  whose fiber is $S^{n-1}$, base space is $(S^{n-1})^{r}$ and map is the projection on the base space.

Our work is inspired by the connection of p.d. kernels to some combinatorial problems on infinite compact graphs. The well known linear programming upper bound for the kissing number problem by~\citet{DGS} can be obtained using Schoenberg's theorem~\ref{thm:PSD_univKissing2}. The extension of Schoenberg's theorem by \citet{BV} is used to obtain the strongest existing semidefinite programming  upper bounds on the kissing number \cite{BV,BestBound,MV}. The kissing number problem is a particular instance of the more general spherical codes problem. Schoenberg's theorem has been used to obtain bounds on spherical codes \cite{Pfender,BVsphCodes}, as well as bounds for other problems from coding theory and discrete geometry, such as binary codes~\cite{SchrijverTervAlg}, sphere packings~\citep{KabLev78,Packing,Packing2}, distance avoiding sets~\cite{DistAvoid}, measurable chromatic number \cite{LbChr}, one-sided kissing number~\cite{OneSidedKn}.  The extension of Schoenberg's theorem by \citet{MultPsd} has been used to obtain bounds for the maximum number of equiangular lines in $\R^n$ \cite{kPointBounds}.

Schoenberg's theorem has been generalized in several ways. First, let $r\ge 0$, and pick $r$ distinct points in $(S^{n-1})$. Consider p.d. kernels invariant under the automorphisms of the sphere fixing those points, that is the stabilizer of those points in $O_n$. Schoenberg's theorem describes the case $r=0$, when no points are fixed. Next, \citet{BV} characterized the case when $K$ is a polynomial and $r=1$ point is fixed. Finally, \citet{MultPsd} characterized the case $r\le n-2$.
 In this paper we extend this idea even further in Theorem~\ref{thm:PSD_multivKissing2}. Namely, we consider  the class of continuous functions $F(x,y,Z)$ on $(S^{n-1})^{r+2}$ such that $F(\cdot,\cdot ,Z)$ is a p.d. kernel for every $Z \in (S^{n-1})^r$. There is a close connection between our result and \cite{MultPsd}, as for any fixed $Z \in (S^{n-1})^{r}$ we have that $F(\cdot,\cdot ,Z)$ is a p.d. kernel invariant under the stabilizer of $Z$. Thus
Musin's result characterizes $F(\cdot,\cdot ,Z)$ for each fixed $Z$. However, it does not fully characterize $F$ as the dependence on $Z$ is not explicit in  \cite{MultPsd} since $Z$ is assumed to be constant.

The approach in this paper differs from the approach by \citet{MultPsd}. \citet{MultPsd} uses modified Gegenbauer polynomials and the corresponding modification of the classical addition theorem for Gegenbauer polynomials~\citep{Addition}.  On the contrary, we reduce the class of considered   functions to the case where Schoenberg's theorem~ applies. To prove our results, we generalize the notion of a p.d. kernel from a kernel on a set to a kernel on a fiber bundle. Our results describe invariant under the action of $O_n$ p.d. kernels on fiber bundles generated using the unit sphere. This result can be also viewed as a continuation of another known extension of Schoenberg's theorem due to \citet{FinPSD} who generalized the theorem for group invariant p.d. kernels on compact topological spaces.


The outline of the paper is as follows. In Section~\ref{sec:prelimKissing2} we present the basic notation and concepts used throughout the paper and motivate our study.  Section~\ref{sec:kernBund} introduces fiber bundles and kernels on them and presents our main Theorems~\ref{thm:leftFixed} and~\ref{thm:PSD_multivKissing2}. Section~\ref{sec:proofsSch} contains major proofs. Further observations and ideas about future research are considered in Section~\eqref{sec:conclusionKissing2}.

\section{Preliminaries and motivation} \label{sec:prelimKissing2}

We denote the sets of real and nonnegative real numbers by $\R$ and $\R_{+}$, respectively.
We denote the unit sphere in $\R^n$  by $S^{n-1}$ and equip it with the standard measure $\omega$. Further, $O_n$ is the orthogonal group in dimension $n$, and $\Sym^n$ is  the space of $n \times n$ symmetric matrices over $\R$.

A matrix $M \in \Sym^n$ is called positive semidefinite  if $ x\tr Mx \ge 0$ for all $x \in \mathbb{R}^n$.
We use the notation $M \succeq 0$ if $M$ is positive semidefinite. We are interested in the infinite dimensional version of positive (semi-) definiteness. 
For any $V\subseteq \R^n$, let $C(V)$ be the set of real-valued continuous functions on $V$. We call kernels on $V$ the set of symmetric real continuous functions on $V \times V$:
\[\cK(V)=\{F \in C(V \times V) :  F(x,y)=F(y,x), \ \forall \ x,y \in V\}.\]
Notice that for any finite set $U$ of size $n$, the space $\cK(U)$ of kernels on $U$ is isomorphic to $\Sym^n$. A kernel $K$ on $V$ is  positive  definite (p.d.) if for any finite $U \subset V$ the restriction of $K$ to $U\times U$ is positive semidefinite.  

\begin{proposition} [ Lemma 1 in Bochner~\cite{FinPSD}]\label{prop:Bochner} Let $V\subset \R$ be a compact set equipped with a finite measure $\mu$ strictly positive on open subsets. Then $K$ is a  p.d. kernel on $V$ if and only if
for any $g(x) \in C(V)$,
\[
 \int_{V}\int_{V} K(x,y)g(x)g(y)d\mu(x)d\mu(y) \ge 0.
\]
\end{proposition}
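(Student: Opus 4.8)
The plan is to prove both implications by approximation, moving between the integral and the finite formulations of positive definiteness via Riemann-type sums in one direction and an approximate identity in the other.

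\textbf{Forward implication.} Assume $K$ is a p.d.\ kernel and fix $g\in C(V)$. For each $N$, partition $V$ into nonempty Borel pieces $A_1,\dots,A_{m_N}$ each of diameter at most $1/N$ (immediate for $V\subseteq\R$: intersect $V$ with a partition of an enclosing interval into subintervals of length $1/N$), and pick $x_k\in A_k$; the $x_k$ are distinct. Since $K$ restricted to the finite set $\{x_1,\dots,x_{m_N}\}$ is positive semidefinite, the quadratic form with coefficient vector $\bigl(g(x_k)\mu(A_k)\bigr)_k$ is nonnegative:
\[
\sum_{k,l} K(x_k,x_l)\,\bigl(g(x_k)\mu(A_k)\bigr)\bigl(g(x_l)\mu(A_l)\bigr)\ \ge\ 0 .
\]
On the other hand $\Phi(x,y):=K(x,y)g(x)g(y)$ is continuous, hence uniformly continuous, on the compact set $V\times V$; writing $\omega_\Phi$ for its modulus of continuity and using $\int_{V\times V}\Phi\,d(\mu\times\mu)=\sum_{k,l}\int_{A_k\times A_l}\Phi\,d(\mu\times\mu)$, we get
\[
\Bigl|\, \int_V\!\!\int_V \Phi\, d\mu\, d\mu \;-\; \sum_{k,l}\Phi(x_k,x_l)\mu(A_k)\mu(A_l)\,\Bigr|
\ \le\ \sum_{k,l}\int_{A_k\times A_l}\bigl|\Phi-\Phi(x_k,x_l)\bigr|\, d(\mu\times\mu)
\ \le\ \mu(V)^2\,\omega_\Phi(2/N),
\]
which tends to $0$ as $N\to\infty$. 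Combining the two displays yields $\int_V\int_V K(x,y)g(x)g(y)\,d\mu(x)\,d\mu(y)\ge 0$.

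\textbf{Converse implication.} Assume the integral inequality holds for all $g\in C(V)$; we must show that for every finite set $U=\{x_1,\dots,x_m\}\subseteq V$ of distinct points and every $c\in\R^m$ one has $\sum_{i,j}c_ic_j K(x_i,x_j)\ge 0$. Let $\delta_0=\tfrac12\min_{i\ne j}|x_i-x_j|$, and for $0<\delta<\delta_0$ set $h_i^\delta(x)=\max\{0,\,\delta-|x-x_i|\}$, a function in $C(V)$ with support contained in the ball $B(x_i,\delta)$; these balls are pairwise disjoint and $h_i^\delta>0$ on the nonempty relatively open set $B(x_i,\delta)\cap V$. This is exactly where the hypothesis on $\mu$ is used: since $\mu$ is strictly positive on open subsets of $V$, the constant $\alpha_i^\delta:=\int_V h_i^\delta\,d\mu$ is strictly positive, so $\tilde h_i^\delta:=h_i^\delta/\alpha_i^\delta$ is a probability density supported in $B(x_i,\delta)$. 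Applying the hypothesis to $g=\sum_i c_i\tilde h_i^\delta$ and expanding $g(x)g(y)$ gives
\[
0\ \le\ \int_V\!\!\int_V K(x,y)g(x)g(y)\,d\mu(x)\,d\mu(y)
\ =\ \sum_{i,j} c_ic_j \int_V\!\!\int_V K(x,y)\,\tilde h_i^\delta(x)\,\tilde h_j^\delta(y)\, d\mu(x)\,d\mu(y).
\]
Because $\tilde h_i^\delta,\tilde h_j^\delta$ are probability densities supported in $B(x_i,\delta)$ and $B(x_j,\delta)$ and $K$ is continuous, each inner integral differs from $K(x_i,x_j)$ by at most $\sup\{|K(x,y)-K(x_i,x_j)|:x\in B(x_i,\delta),\,y\in B(x_j,\delta)\}$, which tends to $0$ as $\delta\to0$. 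Letting $\delta\to0$ in the displayed inequality yields $\sum_{i,j}c_ic_jK(x_i,x_j)\ge 0$, as required.

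\textbf{Main obstacle.} The two approximation estimates are routine; the only genuinely delicate point is the construction of the approximate identity in the converse direction, and in particular the invocation of the strict positivity of $\mu$ on open sets, without which the normalizing constants $\alpha_i^\delta$ could vanish and the whole argument would break down. The remaining bookkeeping (existence of a small-mesh Borel partition, and the harmless fact that repeated sample points can be merged without affecting a positive-semidefiniteness inequality) is straightforward.
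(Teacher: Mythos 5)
Your proof is correct. Note, however, that the paper does not prove this statement at all: it is imported verbatim as Lemma~1 of Bochner's paper and used as a black box, so there is no in-paper argument to compare against. Your two-sided argument is the standard one for this equivalence --- discretizing the integral by Riemann-type sums over a small-mesh Borel partition to deduce the integral inequality from finite positive semidefiniteness, and conversely concentrating continuous bump functions at the chosen points to recover the finite quadratic form from the integral one --- and it correctly identifies the one place where the hypothesis that $\mu$ is strictly positive on open sets is indispensable, namely to guarantee that the normalizing constants $\alpha_i^\delta$ do not vanish. Both limiting estimates (uniform continuity of $K(x,y)g(x)g(y)$ on the compact square, and the approximate-identity convergence of the averaged values to $K(x_i,x_j)$) are handled properly, so the argument is complete.
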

\noindent Throughout the paper we use the following properties of p.d. kernels due to Schoenberg~\cite{Schoenberg2}. 
\begin{proposition} \label{prop:psd} Let $V\subset \R^n$ be compact, then
\begin{enumerate}[(a)]
\item A sum of finitely many p.d. kernels on $V$ is a p.d. kernel.
\item A product of finitely many p.d. kernels on $V$ is a p.d. kernel.
\item A continuous limit of a sequence of p.d. kernels on $V$ is a p.d. kernel.
\end{enumerate}
\end{proposition}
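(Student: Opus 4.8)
The plan is to reduce each of the three statements to the corresponding fact about positive semidefinite matrices, exploiting the defining property that $K$ is a p.d.\ kernel on $V$ precisely when its restriction $K|_{U\times U}$ to every finite $U\subset V$ is a positive semidefinite matrix. Fix an arbitrary finite set $U=\{u_1,\dots,u_m\}\subset V$ and, for a kernel $K$, write $M^{(K)}=\big(K(u_i,u_j)\big)_{i,j=1}^{m}\in\Sym^m$. The point is that these restriction maps interact well with the three operations: $M^{(K_1+K_2)}=M^{(K_1)}+M^{(K_2)}$, the restriction of a product is the entrywise (Hadamard) product $M^{(K_1K_2)}=M^{(K_1)}\circ M^{(K_2)}$, and pointwise convergence $K_\ell\to K$ forces $M^{(K_\ell)}\to M^{(K)}$ entrywise. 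Also note that finite sums, finite products, and continuous limits of symmetric continuous functions are again symmetric and continuous, so in each case the candidate object is a kernel; it remains only to check positive definiteness.

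For part (a), if each $K_s$ is p.d.\ then $M^{(K_s)}\succeq0$, so $x\tr\!\big(\sum_s M^{(K_s)}\big)x=\sum_s x\tr M^{(K_s)}x\ge0$ for all $x\in\R^m$; hence $M^{(\sum_s K_s)}\succeq 0$, and since $U$ was arbitrary, $\sum_s K_s$ is p.d. For part (c), if $K_\ell\to K$ with each $K_\ell$ p.d., then $M^{(K_\ell)}\succeq0$ and $M^{(K_\ell)}\to M^{(K)}$, and since the cone of positive semidefinite matrices in $\Sym^m$ is closed, $M^{(K)}\succeq0$; again $U$ was arbitrary, so $K$ is p.d.

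Part (b) is the only step with genuine content, and it is the Schur product theorem: the Hadamard product of two positive semidefinite matrices is positive semidefinite. I would prove this via spectral decompositions $M^{(K_1)}=\sum_i\lambda_i v_iv_i\tr$ and $M^{(K_2)}=\sum_j\mu_j w_jw_j\tr$ with $\lambda_i,\mu_j\ge0$, together with the identity $(v_iv_i\tr)\circ(w_jw_j\tr)=(v_i\circ w_j)(v_i\circ w_j)\tr$, which gives $M^{(K_1)}\circ M^{(K_2)}=\sum_{i,j}\lambda_i\mu_j\,(v_i\circ w_j)(v_i\circ w_j)\tr\succeq0$ as a nonnegative combination of rank-one positive semidefinite matrices. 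An easy induction on the number of factors then handles an arbitrary finite product, and running this over all finite $U$ shows $K_1\cdots K_t$ is p.d.

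I do not expect a serious obstacle: the whole proposition is a routine ``lift'' of three closure properties of the positive semidefinite cone (under sums, Hadamard products, and limits) through the finite-restriction definition of p.d.\ kernels, the sole nonelementary ingredient being the classical Schur product theorem used in part (b).
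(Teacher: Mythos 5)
Your argument is correct, and it is the standard proof: the paper itself does not prove Proposition~2 but cites it to Schoenberg, so there is no internal proof to compare against. Reducing all three parts to closure properties of the positive semidefinite cone under finite restrictions --- sums, Hadamard products via the Schur product theorem (with the rank-one identity $(v v\tr)\circ(w w\tr)=(v\circ w)(v\circ w)\tr$), and entrywise limits using closedness of the cone --- is exactly the expected route, and your handling of the kernel (symmetry and continuity) side conditions, including the fact that continuity of the limit in part (c) is a hypothesis rather than something to be derived, is sound.
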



\noindent Schoenberg \cite{Schoenberg} characterized p.d. kernels on the unit sphere invariant under the action of $O_n$  in terms of Gegenbauer, or ultraspherical, polynomials.  Gegenbauer polynomials $P_d^{\alpha}(t): \ [-1,1] \rightarrow \mathbb{R}$ of order $\alpha$ and degree $d$ are inductively defined
for any $\alpha \in \R$ and $d \ge 0$, as $ P_0^\alpha(t)=1, P_1^\alpha(t)=2\alpha t$ and for $d>1$
\begin{align}
dP_d^\alpha(t)=2t(d+\alpha-1)P_{d-1}^\alpha(t)-(d+2\alpha-2)P_{d-2}^\alpha(t) \label{Gegenbauer}
\end{align}
For each fixed order $\alpha$, Gegenbauer polynomials form an orthogonal basis, with respect to the weight function $\nu=(1-t^2)^{\alpha-\frac{1}{2}}$, for univariate polynomials on the interval $[-1,1]$. Thus, every univariate polynomial $g(t)$ of degree $d$ can be represented via its Gegenbauer polynomial expansion
\begin{align}
g(t)=\sum_{i=0}^d c_i P^\alpha_i  (t), \ c_i \in \R. \label{J_exp}
\end{align}
We denote  the standard $L^2$ norm of $P^\alpha_i$ by $p_{\alpha,i}:=\int_{-1}^1\big  |P^{\tfrac{n}{2}{-}1}_k (t)\big |^2 (1-t^2)^{\alpha-\frac{1}{2}} dt$.

\begin{proposition}[Schoenberg   \cite{Schoenberg}] \label{thm:PSD_univKissing2}
Let $n\ge 2$. The kernel $K \in \cK(S^{n-1})$ is invariant under the action of $O^n$ and p.d. if and only if
there exists $c_i \ge 0$ for $i = 0,1,\dots$ such that
\begin{align}
K(x,y)=\sum_{i  \in \N} c_i P^{\tfrac{n}{2}{-}1}_i  (x \tr y ),  \label{eq:expJacKissing2}
\end{align}
where the series converges absolutely uniformly. Also, the coefficients of the expansion~\ref{eq:expJacKissing2} are given by
\begin{equation}\label{eq:ckSch}
c_k =\tfrac{1}{p_{n/2{-}1,k}}\int_{x,y \in S^{n-1}} K\lrp{x,y}P^{\tfrac{n}{2}{-}1}_k( x\tr y) d\omega(x)d\omega(y).
\end{equation}
\end{proposition}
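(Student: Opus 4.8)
The plan is to prove the two implications separately, using the classical theory of spherical harmonics on $S^{n-1}$ and the addition theorem for Gegenbauer polynomials as the only external inputs and reducing everything else to Propositions~\ref{prop:Bochner} and~\ref{prop:psd}. Write $\lambda=\tfrac n2-1$; for $n\ge3$ one has $\lambda>0$, while the case $n=2$ is the classical Fourier cosine series and is handled analogously with $P^{0}_i$ read as the Chebyshev polynomial $T_i$. The form of the addition theorem I need is: if $\{Y_{i,1},\dots,Y_{i,h_i}\}$ is a real basis of the space $H_i$ of degree-$i$ spherical harmonics on $S^{n-1}$, orthonormal with respect to $\omega$, then $P^{\lambda}_i(x\tr y)=\gamma_i\sum_{j=1}^{h_i}Y_{i,j}(x)Y_{i,j}(y)$ for all $x,y\in S^{n-1}$, for a constant $\gamma_i>0$. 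For the \emph{sufficiency} direction, suppose $K(x,y)=\sum_{i\in\N}c_iP^{\lambda}_i(x\tr y)$ with $c_i\ge0$ and absolute uniform convergence. Then $O_n$-invariance is immediate because $(gx)\tr(gy)=x\tr y$, and positive definiteness follows since for any finite $U\subset S^{n-1}$ the matrix $\big(P^{\lambda}_i(x\tr y)\big)_{x,y\in U}=\gamma_i\sum_j v_{i,j}v_{i,j}\tr\succeq0$ with $v_{i,j}=(Y_{i,j}(x))_{x\in U}$, so each kernel $(x,y)\mapsto P^{\lambda}_i(x\tr y)$ is p.d.; Proposition~\ref{prop:psd}(a) then handles the finite partial sums and Proposition~\ref{prop:psd}(c) the uniform limit.

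For the \emph{necessity} direction, let $K$ be p.d. and $O_n$-invariant. Since $O_n$ acts transitively on the pairs $(x,y)\in S^{n-1}\times S^{n-1}$ with a prescribed value of $x\tr y$, invariance and continuity give $K(x,y)=f(x\tr y)$ for some $f\in C([-1,1])$. As the $\{P^{\lambda}_i\}_{i\ge0}$ form an orthogonal basis of $L^2\big([-1,1],(1-t^2)^{\lambda-1/2}dt\big)$, I would set $c_k=\tfrac1{p_{\lambda,k}}\int_{-1}^1 f(t)P^{\lambda}_k(t)(1-t^2)^{\lambda-1/2}dt$; because the pushforward of $\omega$ under $y\mapsto x\tr y$ is, up to a fixed positive constant, the measure $(1-t^2)^{\lambda-1/2}dt$ on $[-1,1]$, a Fubini computation rewrites this as $c_k=\tfrac1{p_{\lambda,k}}\iint_{S^{n-1}\times S^{n-1}}K(x,y)P^{\lambda}_k(x\tr y)\,d\omega(x)d\omega(y)$ under the normalization of $\omega$ used in the statement, which is~\eqref{eq:ckSch}. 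To see $c_k\ge0$, insert the addition theorem to obtain $p_{\lambda,k}\,c_k=\gamma_k\sum_j\iint K(x,y)Y_{k,j}(x)Y_{k,j}(y)\,d\omega\,d\omega$; each summand is nonnegative because $\iint K(x,y)h(x)h(y)\,d\omega\,d\omega\ge0$ for every $h\in C(S^{n-1})$ (this is the $\R^n$-version of Proposition~\ref{prop:Bochner}, whose proof carries over unchanged, or follows directly by approximating the integral by Riemann sums over finite nets and using that $K$ restricted to a finite set is positive semidefinite), applied to $h=Y_{k,j}$.

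It then remains to prove that $\sum_k c_kP^{\lambda}_k(x\tr y)$ converges absolutely and uniformly to $K$, and this is the step I expect to be the real work; I would argue via Abel means. A crude estimate gives $0\le c_kP^{\lambda}_k(1)\le\dfrac{P^{\lambda}_k(1)^2}{p_{\lambda,k}}\,\norm{K}_\infty\,\omega(S^{n-1})^2$, and $P^{\lambda}_k(1)^2/p_{\lambda,k}$ grows only polynomially in $k$ by standard Gamma-function asymptotics; together with the bound $|P^{\lambda}_k(t)|\le P^{\lambda}_k(1)$ for $t\in[-1,1]$, this shows that the Abel means $f_\rho(t):=\sum_k c_k\rho^kP^{\lambda}_k(t)$, $0\le\rho<1$, converge absolutely and uniformly. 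The Abel (Poisson) kernel of the ultraspherical expansion is a nonnegative approximate identity, so $f_\rho\to f$ uniformly as $\rho\to1^-$; since $c_k\ge0$ and $P^{\lambda}_k(1)>0$, Abel's theorem then yields $\sum_k c_kP^{\lambda}_k(1)=\lim_{\rho\to1^-}f_\rho(1)=f(1)<\infty$. Hence $\sum_k c_k|P^{\lambda}_k(t)|\le\sum_k c_kP^{\lambda}_k(1)<\infty$ uniformly in $t$, so $\sum_k c_kP^{\lambda}_k(t)$ converges absolutely and uniformly to some continuous $g$; and $f_\rho\to g$ as well (dominated convergence against the summable majorant), so $g=f$ and $K(x,y)=\sum_k c_kP^{\lambda}_k(x\tr y)$ as required.

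The main obstacle is precisely this convergence argument: it needs the polynomial growth estimate for $P^{\lambda}_k(1)^2/p_{\lambda,k}$ and, more essentially, the nonnegativity (or at least the uniform $L^1$-boundedness) of the ultraspherical Poisson kernel, which is what upgrades the formal Gegenbauer expansion of the continuous function $f$ to genuine absolute uniform convergence. All other ingredients are immediate consequences of the addition theorem together with the closure properties already recorded in Propositions~\ref{prop:Bochner} and~\ref{prop:psd}.
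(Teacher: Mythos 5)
The paper does not prove this statement: Proposition~\ref{thm:PSD_univKissing2} is quoted as a known result of Schoenberg~\cite{Schoenberg} and used as a black box (the paper's own contributions, Theorems~\ref{thm:leftFixed} and~\ref{thm:PSD_multivKissing2}, are reductions \emph{to} it). So there is no in-paper proof to compare against; your proposal has to stand on its own, and it does. Your argument is the classical one: the addition theorem turns each $P^{\lambda}_i(x\tr y)$ into a Gram kernel of spherical harmonics, which together with Proposition~\ref{prop:psd} gives sufficiency; two-point homogeneity reduces $K$ to $f(x\tr y)$, the pushforward of $\omega$ identifies the Gegenbauer coefficients with~\eqref{eq:ckSch}, and the integral form of positive definiteness (Bochner's lemma, correctly noted to extend verbatim from $V\subset\R$ to $V\subset\R^n$) gives $c_k\ge0$. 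The convergence step is indeed where the content lies, and your Abel-summability route — polynomial growth of $P^{\lambda}_k(1)^2/p_{\lambda,k}$, nonnegativity of the ultraspherical Poisson kernel to get $f_\rho\to f$ uniformly, then $\sum_k c_kP^{\lambda}_k(1)=f(1)<\infty$ and the bound $|P^{\lambda}_k(t)|\le P^{\lambda}_k(1)$ (valid since $\lambda\ge0$ here) to upgrade to absolute uniform convergence — is sound and is essentially Schoenberg's original argument; an alternative would be to invoke Mercer's theorem, but that buys nothing here. The only caveat is that the positivity of the Poisson kernel and the asymptotics of $P^{\lambda}_k(1)^2/p_{\lambda,k}$ are imported classical facts rather than proved, but you flag them explicitly and they are standard.
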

\medskip
Given $Z=[z_1,\dots,z_r] \in (S^{n-1})^r$, let
\[\Stab_{O_n}(Z)=\{M\in O_n: MZ=Z\}\]
be the stabilizer of $Z$ in $O_n$. \citet{MultPsd} extended Schoenberg's theorem by characterizing p.d. kernels on the unit sphere invariant under the action of $\Stab_{O_n}(Z)$. To present this extension theorem, denote by $\cR(Z)$ the range of $Z$, let $\Pi_Z = Z (Z\tr Z)^{-1} Z^T$ be the orthogonal projection onto $\cR(Z)$, and let $\Pi^{^\perp}_Z  =I - \Pi_Z$, where $I$ is the identity matrix, be the orthogonal projection onto $\cR(Z)^\perp$.
\begin{proposition}[Musin \cite{MultPsd}] \label{thm:Musin}
Let $n\ge 2$, and let $Z \in (S^{n-2})^r$ be of rank $r$. The kernel $K \in \cK(S^{n-1})$ is invariant under the action of $\Stab_{O_n}(Z)$ and p.d. if and only if
there exist p.d. kernels  $c_i$ on $\{x\in \R^r: z=Z\tr y, \ y\in S^{n-1}\}$ for $i = 0,1,\dots$ such that
\begin{align}
K(x,y)=\sum_{i \in \N} c_i(Z \tr x, Z \tr y)P^{\tfrac{n-r}{2}{-}1}_i \hspace{-0.1cm}\lrp{\tfrac{\lrp{\Pi^{^\perp}_Z x}\tr\Pi^{^\perp}_Z y}{\|\Pi^{^\perp}_Z x\|\|\Pi^{^\perp}_Z y\|}}.  \label{eq:expMus}
\end{align}
\end{proposition}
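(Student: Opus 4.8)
\medskip
\noindent\textbf{Sketch of proof.}
The plan is to reduce to Schoenberg's theorem (Proposition~\ref{thm:PSD_univKissing2}) in dimension $n-r$ by slicing $S^{n-1}$ along the fibers of $x\mapsto\Pi_Z^\perp x$. Since $Z$ has rank $r$, every $M\in\Stab_{O_n}(Z)$ fixes $\cR(Z)$ pointwise, hence preserves $\cR(Z)^\perp$ and restricts there to an arbitrary orthogonal map; thus $\Stab_{O_n}(Z)\cong O_{n-r}$ acting on $\cR(Z)^\perp\cong\R^{n-r}$ and trivially on $\cR(Z)$. For $x\in S^{n-1}$ with $\Pi_Z^\perp x\neq 0$ write $u_x=\Pi_Z^\perp x/\|\Pi_Z^\perp x\|$, a point of the unit sphere $S^{n-r-1}$ of $\cR(Z)^\perp$, and put $p_x=Z\tr x$; note $Z\tr$ is a linear isomorphism $\cR(Z)\to\R^r$, so $p_x$ determines $\Pi_Z x$ and $\|\Pi_Z^\perp x\|$. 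Since a pair of unit vectors in $\R^{n-r}$ is determined up to the diagonal $O_{n-r}$-action by its inner product, the $\Stab_{O_n}(Z)$-orbit of $(x,y)$ is determined by $\big(p_x,p_y,u_x\tr u_y\big)$, where $u_x\tr u_y=\big(\Pi_Z^\perp x\big)\tr\Pi_Z^\perp y\,/\,\big(\|\Pi_Z^\perp x\|\,\|\Pi_Z^\perp y\|\big)$. Hence an invariant $K\in\cK(S^{n-1})$ has the form $K(x,y)=G(p_x,p_y,u_x\tr u_y)$ for a continuous symmetric function $G$ on $W\times W\times[-1,1]$, with $W=\{Z\tr y:y\in S^{n-1}\}$; the degenerate set $\cR(Z)\cap S^{n-1}$ (where $u_x$ is undefined) is lower-dimensional, $p_x$ alone determines $x$ there, and continuity of $K$ forces $G(p_x,\cdot,\cdot)$ to be constant in its last argument on it, so nothing is lost.

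Next I would apply Schoenberg fiberwise. For $p\in W$ whose fiber is non-degenerate, the slice $S_p=\{x\in S^{n-1}:Z\tr x=p\}$ is a round $(n{-}r{-}1)$-sphere on which $\Stab_{O_n}(Z)$ acts as the full automorphism group, and $K$ restricted to $S_p$ is the invariant p.d.\ kernel $(u,v)\mapsto G(p,p,u\tr v)$, so Proposition~\ref{thm:PSD_univKissing2} in dimension $n-r$ gives $G(p,p,t)=\sum_{i}c_i(p)\,P_i^{\frac{n-r}{2}-1}(t)$ with $c_i(p)\ge0$. Using the coefficient formula~\eqref{eq:ckSch} I then define, for all $p,q\in W$,
\[
c_i(p,q)\ :=\ \tfrac{1}{p_{\frac{n-r}{2}-1,\,i}}\int_{u,v\in S^{n-r-1}}G(p,q,u\tr v)\,P_i^{\frac{n-r}{2}-1}(u\tr v)\,d\omega(u)\,d\omega(v),
\]
which is continuous and symmetric in $(p,q)$ and reduces to $c_i(p)$ on the diagonal.

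The crux is to show each $c_i$ is a p.d.\ kernel on $W$. Fix $p_1,\dots,p_m\in W$ and $\xi\in\R^m$ and set $M(u,v)=\sum_{a,b}\xi_a\xi_b\,G(p_a,p_b,u\tr v)$; I claim $M$ is a p.d.\ kernel on $S^{n-r-1}$. Indeed, for $u_1,\dots,u_\ell\in S^{n-r-1}$ and $\eta\in\R^\ell$, choosing points $x_a^{(s)}\in S_{p_a}$ with direction $u_s$ (a finite subset of $S^{n-1}$ once the fibers are non-degenerate; a degenerate fiber over $p_a$ simply contributes one fixed point) gives $\sum_{s,t}\eta_s\eta_t M(u_s,u_t)=\sum_{a,b,s,t}(\xi_a\eta_s)(\xi_b\eta_t)\,K\big(x_a^{(s)},x_b^{(t)}\big)\ge0$ by positive-definiteness of $K$. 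Combining this with the addition formula $P_i^{\frac{n-r}{2}-1}(u\tr v)=\lambda_i\sum_j Y_{ij}(u)Y_{ij}(v)$ ($\lambda_i>0$, $\{Y_{ij}\}_j$ an orthonormal basis of the degree-$i$ spherical harmonics on $S^{n-r-1}$) and the analogue of Bochner's lemma for $S^{n-r-1}$ (Proposition~\ref{prop:Bochner}), I obtain $\sum_{a,b}\xi_a\xi_b\,c_i(p_a,p_b)=\tfrac{\lambda_i}{p_{\frac{n-r}{2}-1,i}}\sum_j\iint M(u,v)Y_{ij}(u)Y_{ij}(v)\,d\omega\,d\omega\ge0$, so $c_i$ is p.d.\ on $W$.

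It then remains to verify~\eqref{eq:expMus} and its sufficiency. For the expansion, Cauchy--Schwarz for p.d.\ kernels gives $|c_i(p,q)|\le\sqrt{c_i(p,p)c_i(q,q)}$ and $|P_i^{\frac{n-r}{2}-1}(t)|\le P_i^{\frac{n-r}{2}-1}(1)$, so $\sum_i|c_i(p,q)P_i^{\frac{n-r}{2}-1}(t)|\le\tfrac12\big(G(p,p,1)+G(q,q,1)\big)$ is uniformly bounded; hence the series $\sum_i c_i(p,q)P_i^{\frac{n-r}{2}-1}(t)$ converges absolutely and uniformly to a continuous function with the same Gegenbauer coefficients as $G(p,q,\cdot)$, which must therefore equal $G$, and~\eqref{eq:expMus} follows. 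Conversely, given p.d.\ kernels $c_i$ on $W$ for which the right-hand side of~\eqref{eq:expMus} is a well-defined continuous kernel, on the dense open set $\{x:\Pi_Z^\perp x\neq0\}$ each summand is a product of the pullback of the p.d.\ kernel $c_i$ under $x\mapsto Z\tr x$ and the pullback under $x\mapsto u_x$ of the Schoenberg-positive kernel $(u,v)\mapsto P_i^{\frac{n-r}{2}-1}(u\tr v)$ on $S^{n-r-1}$, so by Proposition~\ref{prop:psd}(a)--(c) the kernel $K$ is p.d.\ there, hence on all of $S^{n-1}$ by continuity, and it is manifestly invariant. I expect the main obstacle to be the bookkeeping around the degenerate fiber $\cR(Z)\cap S^{n-1}$ and the low-dimensional case $n-r=1$ (continuity of $G$ up to the boundary, validity of the invariant-theoretic normal form, and that the finite configurations in the argument for $c_i$ genuinely lift to $S^{n-1}$), rather than the conceptual core that fiberwise Schoenberg coefficients assemble into p.d.\ kernels on the base.
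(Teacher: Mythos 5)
Your argument is sound in substance, but note first that the paper never proves this proposition from scratch: it is quoted as Musin's theorem and then observed to be a corollary of Theorem~\ref{thm:PSD_multivKissing2}, whose proof passes through the cylinder result, Theorem~\ref{thm:leftFixed}. Your overall reduction --- splitting $x$ into $Z\tr x$ and the normalized direction $u_x \in S^{n-r-1}$ of $\Pi^{^\perp}_Z x$, so that an invariant kernel becomes $G(p_x,p_y,u_x\tr u_y)$ --- is exactly the content of the maps $T_1,T_2$ in Lemma~\ref{lem:T1T2}. Where you genuinely diverge is at the crux, positive definiteness of the coefficient kernels $c_i$ on the base. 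The paper cannot apply Schoenberg to the slice $(u,v)\mapsto G(p,q,u\tr v)$ for $p\neq q$, since it need not be p.d.\ (Remark~\ref{rem:0}); it therefore polarizes, applying Schoenberg to the four-term combination $G_b^{a_1,a_2}$, recovering the off-diagonal coefficients by linearity, and only then establishing p.d.-ness through the integral formula, the product rule of Proposition~\ref{prop:psd}(b), and Bochner's criterion, which tests against continuous functions on the fiber and certifies p.d.-ness on all of $A_b$ at once. You instead test p.d.-ness of $c_i$ directly on an arbitrary finite configuration $p_1,\dots,p_m$ with weights $\xi$: the kernel $M(u,v)=\sum_{a,b}\xi_a\xi_b\,G(p_a,p_b,u\tr v)$ is an invariant p.d.\ kernel on $S^{n-r-1}$ because the $m\ell$ lifted points $x_a^{(s)}$ realize it inside $K$, and $\sum_{a,b}\xi_a\xi_b\,c_i(p_a,p_b)$ is then a Schoenberg coefficient of $M$, hence nonnegative (the spherical-harmonics addition formula is not actually needed here --- Schoenberg's theorem applied to $M$, or Proposition~\ref{prop:psd}(b) plus Bochner, already suffices). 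This lifting trick is a clean alternative to polarization; its price is that you must separately prove convergence of $\sum_i c_i(p,q)P_i^{\frac{n-r}{2}-1}$ off the diagonal, which your Cauchy--Schwarz domination handles, whereas the paper inherits absolute uniform convergence from the three Schoenberg expansions it combines. The two caveats you flag are real but minor: the lifting requires every fiber $S_{p_a}$ to be a nondegenerate sphere, so the argument should be run only for $p_a$ not coming from $S^{n-1}\cap\cR(Z)$ and extended to the boundary of $W$ by continuity of $c_i$ (this is precisely why the paper works on the dense set $\cS$ and its analogues); and the existence of a continuous $G$ on the orbit invariants needs the standard compact-orbit-space argument, which is implicit in the paper's use of $T_1$ and $T_2$.
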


When $r=1$ and $K$ is a polynomial, the result in 
Proposition~\ref{thm:Musin}  follows from the decomposition by~\citet{BV}, who use classical results on spherical harmonics, see, e.g.,~\citep[][Chapter 9]{Andrews}. 

Propositions~\ref{thm:PSD_univKissing2} and~\ref{thm:Musin} were used to obtain new upper bounds on the spherical codes problem. In this problem, the number $A(n,\theta)$  of points on $S^{n-1}$  is maximized, for which the pairwise angular distance is not smaller than some value $\theta$.
Schoenberg's theorem (Proposition~\ref{thm:PSD_univKissing2}) leads to the  linear programming upper bound for the spherical codes problem by~\citet{DGS}, and Musin's theorem (Proposition~\ref{thm:Musin}) when $r=1$ leads to the semi-definite programming bounds by~\citet{BV}. Our findings in this paper are motivated by similar upper bounds on the spherical codes problem by~\citet{Kissing1} based on the copositive reformulation of the problem by~\citet{alphaInf}. The bound in~\cite{Kissing1} exploits the so called 2-p.d. functions;
For any $r \ge 0$ and  $V\subseteq \R^n$, a function $F \in  C(V^{r+2})$ is called 2-p.d. on $V$ if for all $Z\in V^r$, $F(\cdot,\cdot,Z)$ is a  p.d. kernel on $V$.

Let  $V=S^{n-1}$, $r\ge 1$, and let $F$ be a 2-p.d. function on the unit sphere invariant under  the action of $O_n$. Implementing the upper bound in~\cite{Kissing1} requires a  characterization of $F$. It is clear that for every fixed $Z\in (S^{n-1})^r$, $F(\cdot,\cdot,Z)$ is invariant under $\Stab_{O_n}(Z)$, and thus has the form as in Proposition~\ref{thm:Musin}. However, in the context of~\cite{Kissing1} $Z$ is variable.  The question is then how to modify Proposition~\ref{thm:Musin} to make explicit the dependance on $Z$.

Although this fact is not stated explicitly, in Proposition~\ref{thm:Musin} functions $c_i$ can differ for different choices of $Z$. More precisely, for each orbit $Z^{O_n}=\{MZ: M\in O_n\}$ of $Z\in S^{n-1}$ we have a different  $c_i^{Z^{O_n}}$, but since $Z$ is fixed, this dependence of $c_i$ on the orbit of $Z$ is implicit in Proposition~\ref{thm:Musin}.  We generalize Proposition~\ref{thm:Musin} taking this dependence into account in order to characterize 2-p.d. functions. In order to simplify our presentation, we consider a generalization of p.d. kernels on a set to p.d. kernels on a fiber bundle (see next Section~\ref{sec:kernBund} for precise definitions). We characterize  p.d. kernels on fiber bundles generated by products of the unit sphere and compact sets. We use this result to characterize the set of 2-p.d. functions on the unit sphere invariant under  the action of $O_n$.

\section{Kernels on bundles} \label{sec:kernBund}
We think on kernels parameterized by a set of parameters $B$. Here not only the kernel depends on the parameters from $B$, but also the domain of the kernel might depend on the choice of the parameters from $B$.

\subsection{Bundles and their properties} \label{subsec:BunDef}
A bundle is a map $f: A \to B$,  where
$A$ is called the total space and $B$ is called the base space~\cite{ Husemoller}. For each $b \in B$, $A_b := f^{-1}(b) \subset A$ is called the fiber over $b$. We think of $A_b$ as representing a set ``parameterized by" $b$.
Our definition of a bundle is quite unrestrictive. In particular, we do not ask the fibers to be homeomorphic.  If for every  $b\in B$ the set $A_b$ is compact, we say that the bundle $f:A\to B$ is compact. In this paper we restrict ourselves to the case of compact bundles, where $A\subset \R^n$,  $B\subset \R^m$.

\begin{definition}  
As examples, we define the following bundles which we frequently use in the sequel.
\begin{enumerate}
\item Given $A\subset \R^n$ and  $B\subset \R^m$, let $\pi_{A,B}: A\times B \to B$ be the projection bundle defined by $\pi_{A,B}(a,b): = b$.
\item Given bundles $f_1:A_1 \to B$ and $f_2:A_2 \to B$,   define \[A_1\otimes A_2: = \{(a_1,a_2) \in A_1\times A_2:f_1(a_1)=f_2(a_2)\}.\]
We define the fiber product bundle $f_1 \otimes f_2: A_1\otimes A_2 \to B$ as \[f_1\otimes f_2(a_1,a_2): = f_1(a_1).\]
The fiber product is also called the Whitney sum. It has the property  that for every $b\in B$, $(A_1 \otimes A_2)_b = (A_1)_b \times (A_2)_b$.
\item Given a bundle $f:A \to B$ and  $U\in \R^k$, let
$\cyl{U}{f}: U\times A \to B$ be the bundle such that $_{U \times }f(u,a): = f(a)$. In the case $U = S^{n-1}$, we call $\cyl{S^{n-1}}{f}$ a cylinder.
\end{enumerate}
\end{definition}

Now, we introduce the notion of kernel on a bundle. The idea is that for each $b \in B$ we have a kernel on $A_b$, and the dependence on $b$ is continuous. Given a bundle $f: A\to B$, we define a kernel on $f$ to be a continuous map $K: A\otimes A \to \R$ such that $K_b:A_b \times A_b \to \R$ is a kernel for each $b\in B$.  We say $K$ is p.d. on $f$ if $K_b$ is p.d. for each $b \in B$. 

We say that a bundle $f: U\to B$ is a subbundle of $g: A \to B$  if $U\subseteq A$, and $f=g|_{U}$. We call $f:U\to B$ a projection subbundle if it is a subbundle of some projection bundle. 

\begin{remark}\label{rem:2PSD-bundles}
Given any projection bundle $\pi_{A,B}: A\times B \to B$, we have $\pi_{A,B}\otimes \pi_{A,B} \cong \pi_{A\times A,B}$, and thus every p.d. kernel on a subbundle $f:U\to B$ of $\pi_{A,B}$ is in correspondence with a continuous map $K:U\times U \times B \to \R$ such that for each $b\in B$, $K(\cdot,\cdot,b)=K_b$ is a p.d. kernel on $U_b$. In the sequel we abuse the notation and make no difference between a kernel on projection subbundle $f:U\to B$ and its corresponding continuous map. 
\end{remark}

Our last definition is the action of a group on a bundle. Given bundle $f:A \to B$ and group $G$, for $G$ to act on $f$ means that $G$ acts both on $A$ and on $B$, and both actions are consistent with $f$. That is,  for all $g \in G$ and $a\in A$, $f(a)^g = f(a^g)$. We denote the orbit of $a\in A$ under $G$  by $a^G:=\{a^g:g \in G\}$, and let $\orb_G(A) = \big \{a^G: a \in A \big \}$  be the set of orbits of $A$. We  define $b^G$ and $\orb_G(B)$ analogously.

When $G$ acts on a bundle $f:A\to B$, it is natural to define  the $G$-orbit bundle of $f$  as $\orb_G(f):\orb_G(A) \to \orb_G(B)$ such that $\orb_G(f)(a^G)=f(a)^G$. Notice that in the $G$-orbit bundle, for any $b^G \in \orb_G(B)$ we have $[\orb_G(A)]_{b^G} = \orb_G(A_b)$.

Now, we propose an extension of the group acting on $B$ to a group acting on projection bundle $\pi_{A,B}$.  Assume $G$ acts on $B$. We define the vertical action of $G$ on $\pi_{A,B}$ by fixing the elements of $A$;
that is, for all $a\in A$, $b\in B$ and $g \in G$ we define the action of $G$ on $A\times B$ by $(a,b)^g := (a,b^g)$. Notice that this action and the action of $G$ on $B$ are consistent with $\pi_{A,B}$, and thus define an action on $\pi_{A,B}$.
Moreover, $\orb_G(\pi_{A,B}) = \pi_{A,\orb_G(B)}$. For any projection subbundle $f: U\to B$, we say that $G$ acts vertically on $f$ if the action of $G$ is the restriction of the vertical action on the corresponding projection bundle.  Notice that this is the case only if for any $b\in B$ and $g\in G$ we have $U_b=U_{b^g}$.

In general, looking at $\orb_G(f)$ is not enough to characterize p.d. kernels on $f$ invariant under the action of $G$ as kernels are bivariate functions and thus one should look at 2-orbits, instead of 1-orbits. One exception is the case of vertical actions, as the following straightforward proposition shows.
\begin{proposition}\label{prop:vertical}
Let $A\subset \R^n$ and  $B\subset \R^m$. Assume that $G$ is a topological group that acts on $B$, and endows $\orb_G(B)$ with the usual topology. Let $f: A\to B$ be a projection subbundle such that $G$  acts vertically on $f$.
Let $K$ be a kernel on $f$ invariant under the vertical action of $G$. Define the function $K^{G}$ as
\[K^{G}(a_1, a_2,b^G):=  K(a_1,a_2,b) \text{ for all } b\in B, \ a_1,a_2\in A_b.\]
Then  $K^{G}$ is a kernel on $\orb_G(f)$, and $K$ is p.d. if and only if $K^{G}$ is p.d.
\end{proposition}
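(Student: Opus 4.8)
The plan is to verify the three assertions in order: that $K^{G}$ is well defined, that it is a kernel on $\orb_G(f)$, and finally the equivalence of positive definiteness. The first two are essentially bookkeeping with the definitions, while the third carries whatever content the proposition has.

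\emph{Well-definedness and continuity.} First I would check that $K^{G}$ does not depend on the representative $b$ of the orbit $b^G$. Suppose $b' = b^g$ for some $g\in G$. Since $G$ acts vertically on $f$, we have $U_{b'} = U_b$, so the fibers agree and the formula makes sense; moreover, invariance of $K$ under the vertical action gives $K(a_1,a_2,b) = K(a_1^g,a_2^g,b^g) = K(a_1,a_2,b')$ because $(a_i,b)^g = (a_i,b^g)$ fixes the $A$-coordinate. Hence $K^{G}$ is well defined on $\orb_G(A)\otimes\orb_G(A)$, which by the remark following the definition of the orbit bundle equals the total space of $\orb_G(f)\otimes\orb_G(f)$, with fiber over $b^G$ equal to $\orb_G(U_b) = U_b$ (the vertical action fixes $A$, so orbits in the fiber are singletons). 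Continuity of $K^{G}$ follows from continuity of $K$ together with the fact that $\orb_G(B)$ is given the quotient topology: the composition $A\otimes A \to B \to \orb_G(B)$ is continuous, $K$ descends to the quotient by the invariance just shown, and the universal property of the quotient topology yields continuity of $K^{G}$. Symmetry $K^{G}_{b^G}(a_1,a_2) = K^{G}_{b^G}(a_2,a_1)$ is inherited directly from $K_b$ being a kernel. This establishes that $K^{G}$ is a kernel on $\orb_G(f)$.

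\emph{The equivalence of positive definiteness.} By definition, $K^{G}$ is p.d.\ on $\orb_G(f)$ iff for every $b^G\in\orb_G(B)$ the kernel $K^{G}_{b^G}$ on the fiber $[\orb_G(A)]_{b^G}$ is p.d., and likewise $K$ is p.d.\ on $f$ iff $K_b$ is p.d.\ on $U_b$ for every $b\in B$. But $[\orb_G(A)]_{b^G} = \orb_G(U_b) = U_b$ as sets (vertical action), and on this common set $K^{G}_{b^G}$ and $K_b$ are literally the same function by construction. Thus "$K_b$ p.d.\ for all $b$" and "$K^{G}_{b^G}$ p.d.\ for all $b^G$" are the same condition, and the two statements are equivalent.

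\emph{Main obstacle.} There is no serious analytic obstacle here; the proposition is genuinely "straightforward" as stated. The only point requiring care is the topological one: one must be explicit that $\orb_G(B)$ carries the quotient topology (this is exactly the hypothesis "endows $\orb_G(B)$ with the usual topology"), so that continuity of $K^{G}$ as a map on the orbit bundle follows from continuity of $K$ — without this, $K^{G}$ need not be continuous. A secondary, purely notational subtlety is keeping straight the identification from Remark~\ref{rem:2PSD-bundles}, i.e.\ that a p.d.\ kernel on a projection subbundle is the same data as a continuous map $K:U\times U\times B\to\R$ with the fiberwise p.d.\ property, and applying the same identification to $\orb_G(f)$, which is again a projection subbundle since $\orb_G(\pi_{A,B}) = \pi_{A,\orb_G(B)}$.
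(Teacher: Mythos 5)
Your proof is correct and follows exactly the route the paper intends: the paper's own proof is the single sentence ``The result follows from the definition of the vertical action and the $G$-orbit bundle,'' and your write-up simply supplies the details (well-definedness via invariance, continuity via the quotient topology, and the identification of the fiber $[\orb_G(A)]_{b^G}$ with $A_b$ so that $K^G_{b^G}$ and $K_b$ coincide as functions). No gap; the only nitpick is the phrase ``orbits in the fiber are singletons'' --- the orbit of $(a,b)$ is $\{a\}\times b^G$, not a singleton, but since the $A$-coordinate is fixed the orbit map restricted to a fiber is still a bijection, which is all you use.
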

\begin{proof}
The result follows from the definition of the vertical action and the $G$-orbit bundle.
\end{proof}

This work is motivated by 2-p.d. functions introduced in \cite{Kissing1}.  Remark \ref{rem:2PSD-bundles} explains that 2-p.d. functions on $A$ are p.d. kernels on $\pi_{A,A^r}$ for some given $r$.
 The interest in \cite{Kissing1} is in $(r+2)$-variate 2-p.d. functions on $S^{n-1}$ invariant under the natural action of $O_n$ on $(S^{n-1})^{r+2}$. In the language of kernels on fiber bundles, those are p.d. kernels on $\pi_{ S^{n-1},(S^{n-1})^r}$ invariant under the natural action of $O_n$ on $\pi_{ S^{n-1},(S^{n-1})^r}$. In the rest of this paper we
 characterize such kernels.

\subsection{Main results}

Next we present two generalizations of Schoenberg's theorem (Propoition~\ref{thm:PSD_univKissing2}).
Given a bundle $f:A \to B$, we define the horizontal action of $O_n$ on the cylinder $\cyl{S^{n-1}}{f}$ by $(x,a)^M = (Mx,a)$ and $b^M =b$,
for each $x \in S^{n-1}$,  $a \in A$, $b\in B$  and $M \in O_n$. In our first Theorem we characterize the p.d. kernels on cylinders, invariant under the horizontal action of $O_n$ .

\begin{theorem} \label{thm:leftFixed} Let $n\ge 2$, $r>0, m>0$, and let $A\in \R^r, B\in \R^m$. Let $f:A \to B$ be a compact bundle. Then a kernel $K$ on the cylinder $\cyl{S^{n-1}}{f}$ is p.d. and invariant under the horizontal  action of $O_n$ if and only if there are p.d. kernels $c_i$ on $f$, $i=0,1,\dots$ such that for all $u_1,u_2 \in S^{n-1}$, all $b \in B$ and $a_1,a_2 \in A_b$
\begin{align}
K_b\lrp{\smat{a_1\\u_1},\smat{a_2\\u_2}}=\sum_{i \in \N} (c_i)_b(a_1,a_2)P^{\tfrac{n}{2}{-}1}_i  (u_1 \tr u_2 ). \label{eq:expBasic}
\end{align}
Also, the coefficients of the expansion are given by
\begin{equation}\label{eq:ck}
(c_k)_b(a_1,a_2) = \tfrac{1}{p_{n/2{-}1,k}}\int_{u_1,u_2 \in S^{n-1}} K_b\lrp{\smat{a_1\\u_1},\smat{a_2\\u_2}}P^{\tfrac{n}{2}{-}1}_k( u_1\tr u_2) d\omega(u_1)d\omega(u_2).
\end{equation}
\end{theorem}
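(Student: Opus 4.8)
The plan is to prove the two implications separately; the ``only if'' direction is the substantial one, and for it I would reduce everything to the scalar Schoenberg theorem (Proposition~\ref{thm:PSD_univKissing2}) by contracting $K_b$ against test vectors.

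For the ``if'' direction, fix $b\in B$. I would check that each summand of \eqref{eq:expBasic}, read as a function on $\lrp{S^{n-1}\times A_b}^2$, is a p.d. kernel there: the factor $(c_i)_b(a_1,a_2)$ is p.d. on $S^{n-1}\times A_b$ because on a finite subset its Gram matrix arises from the positive semidefinite matrix $[(c_i)_b(a,a')]$ by duplicating rows and columns, while the factor $P_i^{\tfrac{n}{2}{-}1}(u_1\tr u_2)$ is p.d. on $S^{n-1}$ by the ``if'' part of Proposition~\ref{thm:PSD_univKissing2} with a single nonzero coefficient, hence p.d. on $S^{n-1}\times A_b$ after pulling back along the projection to $S^{n-1}$. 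Then Proposition~\ref{prop:psd}(b) makes the product p.d., Proposition~\ref{prop:psd}(a) makes each partial sum of \eqref{eq:expBasic} p.d., and since the cone of positive semidefinite matrices is closed, the pointwise limit $K_b$ is p.d. Horizontal $O_n$-invariance is immediate from \eqref{eq:expBasic}: $u_1\tr u_2$ is $O_n$-invariant and $(c_i)_b(a_1,a_2)$ does not depend on $u_1,u_2$.

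For the ``only if'' direction, assume $K$ is p.d. on $\cyl{S^{n-1}}{f}$ and horizontally $O_n$-invariant, and define $(c_k)_b(a_1,a_2)$ by \eqref{eq:ck}. I would proceed in three steps. (i) Using the invariance and transitivity of $O_n$ on ordered pairs of unit vectors with a prescribed inner product (here $n\ge 2$), for each $b$ and $a_1,a_2\in A_b$ there is a continuous $h_{a_1,a_2}\colon[-1,1]\to\R$ with $K_b\lrp{\smat{a_1\\u_1},\smat{a_2\\u_2}}=h_{a_1,a_2}(u_1\tr u_2)$ for all $u_1,u_2\in S^{n-1}$; in particular this expression is symmetric under $u_1\leftrightarrow u_2$, and $h_{a_1,a_2}=h_{a_2,a_1}$ by symmetry of $K_b$. (ii) For a finite $F=\{a_1,\dots,a_N\}\subseteq A_b$ and $\xi\in\R^N$, consider the scalar function $\widetilde K_\xi(u,v):=\sum_{i,j}\xi_i\xi_j\,K_b\lrp{\smat{a_i\\u},\smat{a_j\\v}}$ on $S^{n-1}$: it is continuous, symmetric, $O_n$-invariant, and p.d., since on a finite $U\subset S^{n-1}$ its quadratic form with weights $g$ equals that of $K_b$ on the finite set $U\times F$ with weights $(u,i)\mapsto g(u)\xi_i$. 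Applying Proposition~\ref{thm:PSD_univKissing2} to $\widetilde K_\xi$ and inserting \eqref{eq:ck} (using linearity of the integral in~\eqref{eq:ckSch}) yields $\widetilde K_\xi(u,v)=\sum_{k\in\N}\lrp{\xi\tr C_k\xi}P_k^{\tfrac{n}{2}{-}1}(u\tr v)$, with $\xi\tr C_k\xi\ge 0$ and absolute uniform convergence, where $C_k:=[(c_k)_b(a_i,a_j)]_{i,j}$. As $\xi$ and $F$ are arbitrary, $(c_k)_b$ is a p.d. kernel on $A_b$. (iii) Specializing to $N=2$, $\xi=(1,1)$ and subtracting the $N=1$ expansions at $a_1$ and at $a_2$ (the Schoenberg expansions of the p.d. kernels obtained by restricting $K_b$ to $\{a_i\}\times S^{n-1}$), together with the $u$-symmetry from (i), yields \eqref{eq:expBasic} for all $u_1,u_2$ with absolute uniform convergence; uniqueness of the coefficients --- hence formula~\eqref{eq:ck} --- follows from orthogonality of the Gegenbauer polynomials. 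It remains to see that $c_k$ is a kernel on $f$: symmetry in $(a_1,a_2)$ is clear from \eqref{eq:ck}, and joint continuity on $A\otimes A$ follows from \eqref{eq:ck} because the integrand is continuous on the compact fiber product and one integrates against the fixed measure $\omega\otimes\omega$.

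The main obstacle is precisely this passage from the scalar statement of Proposition~\ref{thm:PSD_univKissing2} to positive semidefiniteness of the matrix $[(c_k)_b(a_i,a_j)]$: the contraction $\widetilde K_\xi$ against an arbitrary $\xi$ is the device that turns a matrix claim into a family of scalar claims, and verifying that $\widetilde K_\xi$ is genuinely p.d. and that its $k$-th Schoenberg coefficient is exactly $\xi\tr C_k\xi$ is the technical heart. The remaining points --- producing the one-variable representative $h_{a_1,a_2}$ via transitivity of $O_n$ on equidistant pairs, and the continuity and uniform-convergence bookkeeping for the $c_k$ --- are routine.
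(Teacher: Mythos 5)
Your proof is correct, and its core device --- contracting $K_b$ against the weight vector $\xi=(1,1)$ on a pair $\{a_1,a_2\}$, applying Schoenberg to the resulting scalar kernel, and then subtracting the two diagonal expansions --- is exactly the paper's construction (their auxiliary kernel $G_b^{a_1,a_2}$ is your $\widetilde K_{(1,1)}$, and their identity $G_b^{a_i,a_i}=4K_b\lrp{\smat{a_i\\ \cdot},\smat{a_i\\ \cdot}}$ is your $N=1$ subtraction). Where you genuinely diverge is in establishing that the coefficient kernels $(c_k)_b$ are positive definite on $A_b$. The paper derives the integral formula \eqref{eq:ck} first and then invokes Bochner's integral criterion (Proposition~\ref{prop:Bochner}): for $h\in C(A_b)$ the double integral of $(c_k)_b\,h\otimes h$ equals an integral of the product of the p.d.\ kernels $K_b$ and $P_k^{\tfrac{n}{2}-1}$, hence is nonnegative by Proposition~\ref{prop:psd}(b). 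You instead run the contraction argument for arbitrary finite $F\subseteq A_b$ and arbitrary $\xi\in\R^N$, identify the $k$-th Schoenberg coefficient of $\widetilde K_\xi$ with $\xi\tr C_k\xi$ via linearity of \eqref{eq:ckSch}, and conclude $C_k\succeq 0$ directly. Your route is slightly more elementary (it never needs Bochner's lemma or the measure-theoretic positivity criterion, only finite matrices), and it unifies the polarization step and the positivity step into one mechanism; the paper's route has the advantage that the product-of-kernels argument is a one-line computation once \eqref{eq:ck} is in hand. Both correctly reduce the matrix-valued claim to the scalar Schoenberg theorem, and your bookkeeping on symmetry, continuity of $c_k$ on $A\otimes A$, and uniqueness of the coefficients via orthogonality matches what the paper does (or, in the case of joint continuity in the base variable, is a touch more careful).
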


Notice that $\pi_{S^{n-1}, (S^{n-1})^r}$ is isomorphic to the cylinder $\cyl{ S^{n-1}}{\id{(S^{n-1})^r}}$, where $\id{(S^{n-1})^r}$ is the identity bundle on $ (S^{n-1})^r$.
Consider the (natural) action of $O_n$ on $\pi_{S^{n-1}, (S^{n-1})^r}$. This action is not  horizontal, and thus Theorem~\ref{thm:leftFixed} does not apply.
Our second theorem describes  p.d. kernels on the bundle $\pi_{ S^{n-1},(S^{n-1})^r}$ which are invariant under the  action of $O_n$. Given $r>0$,  define
\begin{align*}
 \Lambda^{r} & =\lrc{Y \in\Sym^{r}: Y\succeq 0, \ Y_{ii}=1 \text{ for all } i\in \{1,\dots,r\}}.
\end{align*}
\begin{theorem} \label{thm:PSD_multivKissing2}
Let $r\ge 0$ and $n \ge  r+2$.
A kernel $K$ on $\pi_{ S^{n-1},(S^{n-1})^r}$ is invariant under the action of $O_n$ and p.d. if and only if there are p.d. kernels $c_i$, $i=0,1,\dots$  on the projection subbundle  $f: \lrc{\smat{1& y\tr \\ y & Y} \in \Lambda^{r+1}: Y \succ 0} \to \lrc{Y\in \Lambda^{r}: Y\succ 0} $, such that for all $x,y \in S^{n-1}$ and $Z \in (S^{n-1})^r$,
\begin{align}
 K_Z(x,y)=\sum_{i \in \N} (c_i)_{Z\tr Z}(Z \tr x, Z \tr y)P^{\tfrac{n-r}{2}{-}1}_i \hspace{-0.1cm}\lrp{\tfrac{\lrp{\Pi^{^\perp}_Z x}\tr\Pi^{^\perp}_Z y}{\|\Pi^{^\perp}_Z x\|\|\Pi^{^\perp}_Z y\|}}. \label{eq:expJacGenKissing2}
\end{align}
\end{theorem}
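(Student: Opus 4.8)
The plan is to reduce this statement to Theorem~\ref{thm:leftFixed} by exhibiting an explicit change of coordinates on $S^{n-1}$ relative to a fixed reference frame $Z_0 \in (S^{n-1})^r$ of rank $r$, and then transporting everything back to arbitrary $Z$ via the transitivity of the $O_n$-action on orbits of full-rank $r$-frames. First I would fix a canonical $Z_0$, say the matrix whose columns are the first $r$ standard basis vectors, and write each $x \in S^{n-1}$ in block form as $x = (x',x'')$ with $x' = Z_0\tr x \in \R^r$ the ``horizontal'' part and $x''$ the orthogonal complement part, which after normalization $u = x''/\|x''\|$ lives on the sphere $S^{n-r-1}$ of the orthogonal complement $\cR(Z_0)^\perp$. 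The key observation is that $\Stab_{O_n}(Z_0) \cong O_{n-r}$ acts on $x$ by fixing $x'$ and rotating $x''$, i.e. horizontally in the sense of Theorem~\ref{thm:leftFixed}, on the cylinder $\cyl{S^{n-r-1}}{f}$ where the base bundle $f$ tracks the datum $(x',y')$ subject to the feasibility constraint that $(x',x'')$ with $\|x'\|^2 + \|x''\|^2 = 1$ be realizable — which is exactly the condition that the Gram matrix $\smat{1 & y\tr \\ y & Y}$ with $y = x'$, $Y = Z_0\tr Z_0 = I_r$ be positive semidefinite. Applying Theorem~\ref{thm:leftFixed} to $K_{Z_0}$ (invariant under $\Stab_{O_n}(Z_0)$) gives the Gegenbauer expansion~\eqref{eq:expJacGenKissing2} at $Z = Z_0$, with coefficients $(c_i)$ p.d. kernels on that base bundle over $\{Y = I_r\}$.

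Next I would handle general $Z$. Since $K$ is $O_n$-invariant as a kernel on $\pi_{S^{n-1},(S^{n-1})^r}$, for any $M \in O_n$ we have $K_{MZ}(Mx,My) = K_Z(x,y)$; combined with the fact that the right-hand side of~\eqref{eq:expJacGenKissing2} depends on $x,y,Z$ only through the quantities $Z\tr x$, $Z\tr y$, $Z\tr Z$ and the normalized orthogonal inner product (all of which are $O_n$-invariant under simultaneous action), the formula at $Z_0$ propagates to every full-rank $Z$ in the same $O_n$-orbit, i.e. every $Z$ with $Z\tr Z = I_r$. To reach arbitrary $Z \in (S^{n-1})^r$ of rank $r$ with general Gram matrix $Y = Z\tr Z \in \Lambda^r$, $Y \succ 0$, I would note that $Z = Z_0' Y^{1/2}$ for an appropriate $n \times r$ matrix $Z_0'$ with orthonormal columns (a ``rotated and stretched'' frame); writing $Z\tr x = Y^{1/2}(Z_0')\tr x$ and $\Pi_Z = \Pi_{Z_0'}$, the orthogonal-complement part is unchanged, and the horizontal coordinate transforms linearly by $Y^{1/2}$. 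This is precisely why the coefficient kernels $c_i$ must be defined on the bundle $f$ over the \emph{whole} set $\{Y \in \Lambda^r : Y \succ 0\}$ rather than just at $Y = I_r$: the orbit of $Z$ under $O_n$ alone only fixes the frame up to $Z\tr Z$, so the genuine parameter is the pair $(Z\tr x, Z\tr Z)$, which is the point of the base space of $f$. The rank-deficient boundary case ($Z$ of rank $<r$, or $x \in \cR(Z)$ so $\Pi_Z^\perp x = 0$) I would dispose of by a continuity/density argument using Proposition~\ref{prop:psd}(c) and the uniform convergence of the series, noting $\{Y \succ 0\}$ is dense in $\Lambda^r$ and the kernels extend continuously.

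For the converse direction — that any $K$ of the form~\eqref{eq:expJacGenKissing2} with p.d. $c_i$ is a p.d. $O_n$-invariant kernel — I would check $O_n$-invariance directly from the invariance of all the arguments appearing, and check positive-definiteness by fixing $Z$, invoking Theorem~\ref{thm:leftFixed}'s converse (each term is then p.d. on the cylinder after the coordinate change), and using that sums and uniform limits of p.d. kernels are p.d. (Proposition~\ref{prop:psd}(a),(c)); the Gegenbauer factor being a p.d. kernel in $u_1,u_2$ on $S^{n-r-1}$ for $n - r \ge 2$ is Proposition~\ref{thm:PSD_univKissing2}, and multiplying it by the p.d. kernel $(c_i)_Y$ in the $(Z\tr x, Z\tr y)$ variables and using Proposition~\ref{prop:psd}(b) for the product. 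The main obstacle I anticipate is bookkeeping the coordinate change cleanly: making precise that $\Stab_{O_n}(Z_0)$ acting on $S^{n-1}$ really is the horizontal action on the cylinder $\cyl{S^{n-r-1}}{f}$ for the right base bundle $f$, identifying that base bundle's total and base spaces with the Gram-matrix sets $\{\smat{1 & y\tr\\ y& Y}\in\Lambda^{r+1}: Y\succ 0\}$ and $\{Y\in\Lambda^r: Y\succ 0\}$, and tracking how the p.d.-kernel-on-$f$ condition for the $c_i$ is preserved when we simultaneously vary $Z$ (equivalently $Y$) and the horizontal coordinates — i.e. verifying that ``p.d. for each fixed $Y$, continuous in $Y$'' as delivered by Theorem~\ref{thm:leftFixed} at each $Z$ in a fixed orbit is exactly ``p.d. kernel on the bundle $f$'' in the sense defined in Section~\ref{sec:kernBund}.
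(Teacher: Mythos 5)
Your overall route is the paper's: split $x$ into $\Pi_Z x$ and the normalized orthogonal component, recognize $\Stab_{O_n}(Z)\cong O_{n-r}$ as a horizontal action on a cylinder over $S^{n-r-1}$, apply Theorem~\ref{thm:leftFixed}, and identify the base data with the Gram matrices $\smat{1&y\tr\\y&Y}$. The difference is that you run this orbit-by-orbit from a reference frame $Z_0$ and then propagate by $O_n$-invariance, whereas the paper runs it once, uniformly over the dense set $\cS$ of full-rank frames: it builds continuous families of isomorphisms $\phi_Z,\gamma_Z$, packages them into the maps $T_1,T_2$ of Lemma~\ref{lem:T1T2}, transfers $K$ to a single kernel $L$ on the cylinder $\cyl{S^{n-r-1}}{f}$ with $f:\B\to\cS$ via Lemma~\ref{lem:ContinMaps}, and applies Theorem~\ref{thm:leftFixed} to $L$ exactly once.

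The step you flag as your ``main obstacle'' is in fact the substantive gap: you do not establish that the coefficients produced orbit-by-orbit (i) depend on $Z$ only through $Z\tr Z$ and (ii) assemble into a single \emph{continuous} p.d.\ kernel on the bundle $f$ over $\lrc{Y\in\Lambda^r: Y\succ 0}$. Neither follows formally from invoking Theorem~\ref{thm:leftFixed} at each fixed orbit separately, since that gives no control on how the coefficients vary across orbits and does not by itself single out a canonical choice of them. The paper closes this with three concrete ingredients: the integral formula \eqref{eq:ck}, which determines the coefficients uniquely and, because $K$ and $T_1$ are jointly continuous in $(x,Z)$, yields joint continuity of $(d_k)_Z(u_1,u_2)$ in all variables including $Z$; the equivariance $\phi_{MZ}=M\phi_Z$ and $\gamma_{MZ}=M\gamma_Z$, which together with the $O_n$-invariance of $K$ gives $L_{MZ}=L_Z$ and hence, through \eqref{eq:ck}, that $d_k$ is invariant under the vertical action of $O_n$ on $f$; and Proposition~\ref{prop:vertical}, which descends $d_k$ to a p.d.\ kernel $c_k$ on the orbit bundle, whose total and base spaces are exactly the Gram-matrix sets in the statement. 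If you supply these three steps (or an equivalent), your argument matches the paper's; without them the existence of the $c_i$ as kernels on $f$, rather than as an unglued family indexed by $Y$, is not proved. The remaining items --- the density/continuity treatment of rank-deficient $Z$ and of $x\in\cR(Z)$, and the converse direction via Proposition~\ref{prop:psd} and Theorem~\ref{thm:leftFixed} --- are handled as in the paper.
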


The expressions in Theorem \ref{thm:PSD_multivKissing2} and Proposition \ref{thm:Musin} are very similar. Indeed, the difference is that Proposition \ref{thm:Musin} does not present $c_i$ as a function of $Z \tr Z$ since $Z$ is assumed to be fixed. Notice that Proposition \ref{thm:Musin} follows from Theorem \ref{thm:PSD_multivKissing2}, as given any $Z \in (S^{n-2})^r$ and any  p.d. kernel $K$ on $S^{n-1}$ invariant under the action of $\Stab_{O_n}(Z)$, there is
p.d. kernel $\hat K$  on $\pi_{ S^{n-1},(S^{n-1})^r}$ invariant under the action of $O_n$ such that $\hat{K}_Z(x,y) = K(x,y)$. 

\section{Proofs of main theorems} \label{sec:proofsSch}
In this section we present the proofs of Theorems \ref{thm:leftFixed} and \ref{thm:PSD_multivKissing2}.

\subsection{Proof of Theorem \ref{thm:leftFixed}} \label{subsec:proofThm1}
Let  $K$ be a kernel on $\cyl{S^{n-1}}{f}$. The ``only if" part of the statement follows from Propositions~\ref{prop:psd} and~\ref{thm:PSD_univKissing2}. To prove the converse, let  $K$ be p.d. and  invariant under the horizontal action of $O_n$.
Let $b\in B$ and $a_1,a_2\in A_b$. The kernel
\[
G_{b}^{a_1,a_2}(u_1,u_2)= K_b\lrp{\smat{a_1\\u_1},\smat{a_2\\u_2}}+K_b\lrp{\smat{a_2\\u_1},\smat{a_1\\u_2}} + K_b\lrp{\smat{a_1\\u_1},\smat{a_1\\u_2}}+K_b\lrp{\smat{a_2\\u_1},\smat{a_2\\u_2}}
\]
is p.d. on $S^{n-1}$ and invariant under $O_{n}$.
From Schoenberg's theorem (Proposition~\ref{thm:PSD_univKissing2}) we have
\begin{align}
G_{b}^{a_1,a_2}(u_1,u_2)= \sum_{k \ge 0} (d_i)_b(a_1,a_2) P^{\tfrac{n}{2}{-}1}_k  (u_1 \tr u_2 ), \label{eq:expD}
\end{align}
where the $(d_i)_b(a_1,a_2)$ are nonnegative, and the series \eqref{eq:expD} converges absolutely uniformly.

As $O_n$ acts transitively on $S^{n-1}$, we have
\[K_b\lrp{\smat{a_1\\u_1},\smat{a_2\\u_2}} =  K_b\lrp{\smat{a_1\\u_2},\smat{a_2\\u_1}} =  K_b\lrp{\smat{a_2\\u_1},\smat{a_1\\u_2}},\]
thus,
\begin{align*}
  G_{b}^{a_1,a_2}(u_1,u_2) & =  2K_b\lrp{\smat{a_1\\u_1},\smat{a_2\\u_2}}+ K_b\lrp{\smat{a_1\\u_1},\smat{a_1\\u_2}}+K_b\lrp{\smat{a_2\\u_1},\smat{a_2\\u_2}}\\
   G_b^{a_i,a_i}(u_1,u_2) & =  4K_b\lrp{\smat{a_i\\u_1},\smat{a_i\\u_2}} & (i=1,2).
\end{align*}
Defining $(c_k)_b(a_1,a_2) =  (d_k)_b(a_1,a_2)-\tfrac{1}{4} (d_k)_b(a_1,a_1)-\tfrac{1}{4} (d_k)_b(a_2,a_2)$ and using \eqref{eq:expD}, we obtain
\begin{align}
K_b\lrp{\smat{a_1\\u_1},\smat{a_2\\u_2}}= & \tfrac{1}{2}\lrp{G_{b}^{a_1,a_2}(u_1,u_2)-\tfrac{1}{4}G_b^{a_1,a_1}(u_1,u_2)-\tfrac{1}{4}G_b^{a_2,a_2}(u_1,u_2)} \nonumber \\
= &  \ \tfrac{1}{2} \sum_{k \in \N} (c_k)_b(a_1,a_2) P^{\tfrac{n}{2}{-}1}_k  (u_1 \tr u_2 ). \label{eq:expC2}
\end{align}
\begin{remark} \label{rem:0}
Notice that Schoenberg's theorem can not be applied directly to $K_b\lrp{\smat{a_1\\u_1},\smat{a_2\\u_2}}$, as this is not necessarily a p.d. kernel for all $b \in B$ and  $a_1, a_2 \in A_b$. Intuitively the reason for this is that this function does not correspond to a ``principal submatrix" of $K_b$ when $a_1 \neq a_2$.
\end{remark}

Next, we argue that $c_k$'s are p.d. kernels on $f$. Fix $k \ge 0$, then we claim that
\begin{claim} \label{cl:intExp}
For every $b\in B$, $a_1,a_2\in A_b$,
\begin{equation*}
(c_k)_b(a_1,a_2) = \tfrac{1}{p_{n/2{-}1,k}}\int_{u_1,u_2 \in S^{n-1}} K_b\lrp{\smat{a_1\\u_1},\smat{a_2\\u_2}}P^{\tfrac{n}{2}{-}1}_k( u_1\tr u_2) d\omega(u_1)d\omega(u_2) \nonumber
\end{equation*}
\end{claim}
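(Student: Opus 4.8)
The plan is to extract the coefficients $(c_k)_b(a_1,a_2)$ from the expansion \eqref{eq:expC2} by integrating against the Gegenbauer polynomial $P^{n/2-1}_k$, exploiting the orthogonality of the Gegenbauer polynomials together with the addition formula / reproducing property on the sphere. First I would recall the standard fact (a consequence of the Funk–Hecke formula, or directly of Schoenberg's setup) that for the normalized surface measure $\omega$,
\[
\int_{u_1,u_2 \in S^{n-1}} P^{\tfrac{n}{2}{-}1}_i(u_1\tr u) \, P^{\tfrac{n}{2}{-}1}_k(u\tr u_2)\, d\omega(u) = \delta_{ik}\, \gamma_{n,k}\, P^{\tfrac{n}{2}{-}1}_k(u_1\tr u_2)
\]
for an appropriate constant $\gamma_{n,k}>0$, and that integrating once more in $u_1$ and $u_2$ against $P^{n/2-1}_k(u_1\tr u_2)$ picks out $p_{n/2-1,k}$ up to the right normalization. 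The upshot is a clean orthogonality relation: $\int_{S^{n-1}\times S^{n-1}} P^{n/2-1}_i(u_1\tr u_2) P^{n/2-1}_k(u_1\tr u_2)\, d\omega(u_1)\,d\omega(u_2) = \delta_{ik}\, p_{n/2-1,k}$ after suitable normalization of $\omega$ (this is exactly the computation underlying \eqref{eq:ckSch} in Proposition \ref{thm:PSD_univKissing2}).

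Then I would fix $b\in B$ and $a_1,a_2\in A_b$, substitute the expansion \eqref{eq:expC2} for $K_b\lrp{\smat{a_1\\u_1},\smat{a_2\\u_2}}$ into the right-hand side of the claimed identity, and interchange the sum with the double integral. The interchange is justified because, by Schoenberg's theorem (Proposition \ref{thm:PSD_univKissing2}), the series \eqref{eq:expD} — and hence \eqref{eq:expC2}, which is a fixed finite linear combination of three such series — converges absolutely and uniformly on $S^{n-1}\times S^{n-1}$, so uniform convergence on the compact domain permits term-by-term integration. Applying the orthogonality relation then kills all terms except $i=k$, leaving precisely $(c_k)_b(a_1,a_2)$ after dividing by $p_{n/2-1,k}$, which establishes the formula. (The factor $\tfrac12$ appearing in \eqref{eq:expC2} versus \eqref{eq:ck} should be tracked carefully here; I expect it is absorbed once one writes \eqref{eq:expC2} with the correct normalization, i.e. the $(c_k)_b$ in \eqref{eq:expBasic} already incorporate the $\tfrac12$, so that the statement of the theorem and the claim are consistent.)

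The main obstacle — really the only nontrivial point — is the justification of swapping $\sum$ and $\int\!\int$, which rests on the uniform convergence guaranteed by Schoenberg's theorem; everything else is the standard Gegenbauer orthogonality bookkeeping. A secondary point worth a sentence in the write-up is continuity of $(c_k)_b(a_1,a_2)$ jointly in $(b,a_1,a_2)$ and its positive-definiteness as a kernel on $f$: continuity follows from the integral formula just proved (continuity of $K$ on the compact fiber product together with dominated convergence), and positive-definiteness of $c_k$ on the fiber over $b$ follows either from the integral representation (writing it as an average of the p.d. kernel $K_b$ composed with the reproducing kernel $P^{n/2-1}_k$, which is itself p.d. by Proposition \ref{thm:PSD_univKissing2}, and using Proposition \ref{prop:psd}) or from the nonnegativity of the $(d_k)_b$ combined with a short argument that the particular combination defining $(c_k)_b$ remains p.d. — but strictly speaking these latter remarks belong after the claim rather than inside its proof.
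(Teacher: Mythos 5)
Your proposal is correct and follows essentially the same route as the paper: substitute the expansion \eqref{eq:expC2} into the integral, justify term-by-term integration by the absolute uniform convergence guaranteed by Schoenberg's theorem together with the boundedness of $P^{\tfrac{n}{2}-1}_k$ on $[-1,1]$, and invoke Gegenbauer orthogonality to isolate the $k$-th coefficient. Your side remark about the factor $\tfrac{1}{2}$ is well taken --- the paper's own final computation silently drops the $\tfrac{1}{2}$ appearing in \eqref{eq:expC2}, so the normalization of the $(c_k)_b$ does need to be tracked as you indicate.
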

 Fix $b \in B$. Claim~\ref{cl:intExp} and the continuity of $K$ imply that $(c_k)_b$ is continuous.  From \eqref{eq:ck} it follows that $(c_k)_b(a_1,a_2)=(c_k)_b(a_2,a_1)$ for all $a_1,a_2\in A_b$. Hence $(c_k)_b$ is a kernel.

 From our assumptions, $A_b$ is compact. We use Proposition~\ref{prop:Bochner} to show that $(c_k)_b$ is p.d. on $S^{n-1}\times A_b$.  Let $h\in C(A_b)$ be given, then
\begin{align*}
&\int_{a_1,a_2\in A_b}(c_k)_b(a_1,a_2)h(a_1)h(a_2) d\mu(a_1)d\mu(a_2) \\
&\qquad=\int_{\substack{ a_1,a_2\in A_b\\
u_1,u_2 \in S^{n-1}
}} K_b\lrp{\smat{a_1\\u_1},\smat{a_2\\u_2}}P^{\tfrac{n}{2}{-}1}_k( u_1\tr u_2)  h(a_1)h(a_2) d\omega(u_1)\dots d\mu(a_2)\\
&\qquad \ge 0.
\end{align*}
The inequality holds since $K_b$ and $P^{\tfrac{n}{2}{-}1}_k$ are p.d. on $S^{n-1}\times A_b$, and from Proposition \ref{prop:psd}.b their product is p.d. too.

To finish, we prove Claim \ref{cl:intExp}. We know that  $\big\{P^{\tfrac{n}{2}{-}1}_i\big \}_{i\in \N}$ is a sequence of orthogonal polynomials on $S^{n-1}$ under $\omega$. Fix $k\ge 0$, $b\in B$ and $a_1,a_2\in A_b$. From Schoenberg's theorem~\ref{thm:PSD_univKissing2} and the definition of $(c_i)_b(a_1,a_2)$ in~\ref{eq:expC2}, we have that
 $\sum_{i \ge 0} (c_i)_b(a_1,a_2)P^{\tfrac{n}{2}{-}1}_i( u_1\tr u_2)$ converges absolutely uniformly on $S^{n-1}\times S^{n-1}$. Hence, as $P_k$ is continuous on $[-1,1]$ and therefore bounded, the series
 \[\sum_{i \ge 0}(c_i)_b(a_1,a_2)P^{\tfrac{n}{2}{-}1}_i( u_1\tr u_2)P^{\tfrac{n}{2}{-}1}_k( u_1\tr u_2)\]
 converges absolutely uniformly too. Therefore
\begin{align*}
\hspace{-0,7cm} \int_{u_1,u_2 \in S^{n-1}}& K_b \big (\smat{a_1\\u_1},  \smat{a_2\\u_2},  b \big )P^{\tfrac{n}{2}{-}1}_k( u_1\tr u_2) d\omega(u_1)d\omega(u_2) \\
&= \int_{u_1,u_2 \in S^{n-1}} \sum_{i \ge 0} (c_i)_b(a_1,a_2)P^{\tfrac{n}{2}{-}1}_i( u_1\tr u_2)P^{\tfrac{n}{2}{-}1}_k( u_1\tr u_2)d\omega(u_1)d\omega(u_2)\nonumber \\
&= \sum_{i \ge 0}  (c_i)_b(a_1,a_2)\int_{u_1,u_2 \in S^{n-1}} P^{\tfrac{n}{2}{-}1}_i( u_1\tr u_2)P^{\tfrac{n}{2}{-}1}_k(  u_1\tr u_2)d\omega(u_1)d\omega(u_2)\nonumber \\
&= \ p_{n/2{-}1,k}(c_k)_b(a_1,a_2). \nonumber
\end{align*}

\subsection{Proof of Theorem~\ref{thm:PSD_multivKissing2}} \label{subsec:proofThm2}

The idea of the proof is to relate kernels on $\pi_{ S^{n-1},(S^{n-1})^r}$ to kernels on cylinders over $S^{n-r-1}$ and apply Theorem~\ref{thm:leftFixed}. We do this via continuous transformations using Lemmas~\ref{lem:ContinMaps} and \ref{lem:T1T2} below.

\begin{lemma}\label{lem:ContinMaps} Let $f:C \to B$ be a compact bundle. Let $T:A\rightarrow C$ be a continuous function. Let $K$ be a kernel on $f$. Define  $L$ by $L_b(a_1,a_2)=K_b(T(a_1),T(a_2))$ for all $b\in B$, $a_1,a_2\in A_b$.
\begin{enumerate}
\item   $L$ is a kernel on $f \circ T$.
\item  If $K$ is p.d. on $f$, then $L$ is p.d. on $f \circ T$.
\item If $T$ is surjective and $L$ is p.d. on $f \circ T$, then  $K$ is p.d. on $f$.
\end{enumerate}
\end{lemma}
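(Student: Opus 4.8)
The plan is to prove each of the three parts of Lemma~\ref{lem:ContinMaps} directly from the definitions, noting that the composite map $f\circ T: A \to B$ is indeed a bundle (its fiber over $b$ is $(f\circ T)^{-1}(b) = T^{-1}(C_b)$), and that for $a_1, a_2$ in this fiber we have $T(a_1), T(a_2) \in C_b$, so the expression $K_b(T(a_1), T(a_2))$ makes sense.

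\textbf{Part 1.} First I would check that $L$ is a kernel on $f\circ T$, i.e. that $L: A\otimes A \to \R$ is continuous and that each $L_b$ is symmetric. Continuity is immediate: on the fiber product $A\otimes A$, the map $(a_1,a_2)\mapsto (T(a_1),T(a_2))$ lands in $C\otimes C$ (since $f(T(a_1))=f(T(a_2))$ when $(f\circ T)(a_1)=(f\circ T)(a_2)$) and is continuous because $T$ is, and $K$ is continuous on $C\otimes C$; hence $L$ is a composition of continuous maps. Symmetry of $L_b$ follows from symmetry of $K_b$: $L_b(a_1,a_2)=K_b(T(a_1),T(a_2))=K_b(T(a_2),T(a_1))=L_b(a_2,a_1)$.

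\textbf{Part 2.} Next, suppose $K$ is p.d. on $f$, and fix $b\in B$ and a finite set $U \subset (A)_b = T^{-1}(C_b)$. I want the matrix $\big(L_b(a_1,a_2)\big)_{a_1,a_2\in U}$ to be positive semidefinite. Let $W = T(U) \subset C_b$, a finite set, and let $P$ be the $0/1$ matrix with $P_{a, w} = 1$ iff $T(a) = w$, for $a \in U$, $w \in W$. Then $\big(L_b(a_1,a_2)\big)_{U\times U} = P\, \big(K_b(w_1,w_2)\big)_{W\times W}\, P\tr$, which is a congruence of the PSD matrix $\big(K_b(w_1,w_2)\big)_{W\times W}$ (PSD since $K$ is p.d. on $f$), hence PSD. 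Therefore $L_b$ is p.d. for each $b$, so $L$ is p.d. on $f\circ T$.

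\textbf{Part 3.} Finally, assume $T$ is surjective and $L$ is p.d. on $f\circ T$; I want $K$ p.d. on $f$. Fix $b\in B$ and a finite set $W = \{w_1,\dots,w_k\} \subset C_b$. Since $T$ is surjective, for each $w_j$ choose $a_j\in A$ with $T(a_j)=w_j$; then $f(T(a_j)) = f(w_j) = b$, so each $a_j\in (A)_b$. The set $\{a_1,\dots,a_k\}$ is finite (if the $w_j$ are distinct the $a_j$ are distinct), and $\big(K_b(w_i,w_j)\big)_{i,j} = \big(K_b(T(a_i),T(a_j))\big)_{i,j} = \big(L_b(a_i,a_j)\big)_{i,j}$, which is PSD because $L$ is p.d. on $f\circ T$. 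Hence $K_b$ is p.d. for each $b$, so $K$ is p.d. on $f$. I do not expect any genuine obstacle here; the only subtlety worth being careful about is checking that $f\circ T$ is really a (compact, if needed) bundle and that the fibers match up so that all the expressions $K_b(T(a_1),T(a_2))$ are well-defined — once that bookkeeping is in place, each part is a one-line congruence or substitution argument.
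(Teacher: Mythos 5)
Your proposal is correct and follows essentially the same route as the paper's proof: continuity and symmetry by composition, positive semidefiniteness by direct substitution, and surjectivity to lift finite subsets of $C_b$ back to $A_b$. Your explicit congruence $P\,K_b|_{W\times W}\,P\tr$ in Part 2 is in fact slightly more careful than the paper, which simply asserts that $\smat{K_b(T(a_i),T(a_j))}_{i,j}\succeq 0$ without addressing possible repetitions among the $T(a_i)$.
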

\begin{proof}
As $T$ and $K$ are continuous, $L$ is continuous by definition. Also, for any $b\in B$ and any $a_1,a_2 \in A_b$, we have $L_b(a_1,a_2) = K_b(T(a_1),T(a_2)) = K_b(T(a_2),T(a_1)) =  L_b(a_2,a_1)$. Thus $L$ is a kernel on $f \circ T$.
Now assume $K$ is p.d. on $f$. For any $k >0$, any $b\in B$, and any $a_1,a_2,\dots,a_k \in A_b$, we have that  $\smat{L_b(a_i,a_j)}_{1 \le i,j \le k} = \smat{K_b(T(a_i),T(a_j))}_{1 \le i,j \le k} \succeq 0$. Thus $L$ is p.d. on $f \circ T$.
Now assume $T$ is surjective and $L$ is p.d. on $f \circ T$. Then given  $k >0$, any $b\in B$, and any $c_1,c_2,\dots,c_k \in C_b$
there are $a_1,a_2,\dots,a_k \in A_b$  such that $c_i = T(a_i)$. Thus  the matrix $\smat{K_b(c_i,c_j)}_{1 \le i,j \le k} =\smat{ L_b(a_i,a_j)}_{1 \le i,j \le k} $ is p.d.. Therefore $K$ is p.d. on $f$.
\end{proof}

Given $Z \in (S^{n-1})^r$. For any $x \in \R^n$ we can write
$x=\Pi_Z x+\Pi^{^\perp}_Z x,$
 where  $\Pi_Z x \in \cR(Z)$ and $\Pi^{^\perp}_Z x \in \cR(Z)^\perp$. To prove Theorem~\ref{thm:PSD_multivKissing2}, we exploit the fact that  $\Stab_{O_n}(Z)$ fixes  $\Pi_Z S^{n-1}$ and acts transitively on $\Pi^{^\perp}_{Z}S^{n-1}$.  Therefore in the following steps, for every $x\in S^{n-1}$ we separate the fixed component $\Pi_{Z}x$ and exploit the symmetry of the varying component  $\Pi^{^\perp}_{Z}x$.

We have $\dim(\cR(Z)^\perp) = n - \dim(\cR(Z)) = n- \rank Z$, and therefore $\cR(Z)^\perp$ is isomorphic to $\R^{n- \rank Z}$. Namely, there is an isomorphism $\phi_Z$ between $R^{n-\rank Z -1}$ and $\cR(Z)^\perp$. Analogously,  $\cR(Z)$ has dimension $\rank Z$ and thus is isomorphic to $\R^{\rank Z}$, and there is an isomorphism $\gamma_Z$ between $R^{\rank Z}$ and $\cR(Z)$. Let
\[\cS = \{ Z \in (S^{n-1})^r: \rank Z  = r\}.\]
The set $\cS$ is dense in $(S^{n-1})^r$. In the sequel we restrict most of our arguments to $\cS$, to avoid ``singularities" in further proofs and definitions. In particular, the dependence on $Z$ of the isomorphisms $\phi_Z$ and $\gamma_Z$ can not be continuous in the whole $(S^{n-1})^r$, but when we restrict ourselves to $\cS$,  $\phi_Z$ and $\gamma_Z$ can be chosen continuous. For instance, let $Ort: \R^{n\times r} \to \R^{n\times (n-r)}$ be such that $Ort(Z)$ provides an orthonormal basis of $\cR(Z)^\perp$ for any $Z\in \R^{n\times r}$ of rank $r$. Then the isomorphism between $\cR(Z)^\perp$ and $\R^{n- r}$ can be viewed as the bijection:
\begin{align*}
\phi_Z:  \R^{n-r}\to \cR(Z)^\perp, \ \phi_Z(v)=Ort(Z) v \text{ for all } v\in \R^{n-r}.
\end{align*}
We can construct a continuous isomorphism between $R^{r}$ and $\cR(Z)$  as the following bijection
\begin{align*}
\gamma_Z: \R^{r} \to \cR(Z), \ \gamma_Z(u)=Z(Z\tr Z)^{-1} u \text{ for all } u\in \R^{r}.
\end{align*}
We are particularly interested in the isomorphism between $\Pi_Z S^{n-1}\subset \cR(Z)$  and
 \begin{align*}
B_Z: =\gamma_Z^{-1} \circ \Pi_Z S^{n-1}= \{Z\tr x:  x\in S^{n-1}\}.
\end{align*}

Notice that for any $Z\in \cS$ we can send  $x\in \Pi^{^\perp}_{Z}S^{n-1}$  to the unit sphere in $\R^{n- r}$ by normalizing $x$. Then, since $\Stab_{O_n}(Z)$ is isomorphic to $O_{n-r}$, any action of $\Stab_{O_n}(Z)$ on $\Pi^{^\perp}S^{n-1}$ can be associated with an action of $O_{n-r}$ on $S^{n-r -1}$. Hence we can use the result for the orthogonal group acting on the unit sphere, described in Theorem~\ref{thm:leftFixed}. To formalize this procedure, we need the following lemma.





\begin{lemma}\label{lem:T1T2} The maps
\[\begin{array}{rcccc}
T_1: S^{n-r-1} \times \{(u,Z): Z\in \cS, \ u\in B_Z\}  \rightarrow & \hspace{-0.1cm} \{(x, Z):Z \in \cS,\, x \in S^{n-1}\}  \\[3pt]
        \smat{v \\ u \\ Z} \mapsto  & \smat{ \phi_Z(v)\sqrt{1-\|\gamma_Z(u)\|^2} + \gamma_Z(u) \\ Z}
\end{array}\]
\[\begin{array}{rcccc}
 T_2:  \{(x,Z):Z \in \cS, \ x \in S^{n-1}\setminus\cR(Z)\} \rightarrow &  S^{n-r-1}\times \{(u,Z): Z\in \cS, u \in B_Z\}  \\[5pt]
        \smat{x \\Z} \mapsto  &  \smat{  \tfrac{\phi_Z^{-1}(\Pi^{^\perp}_Z x) }{\|\phi_Z^{-1}(\Pi^{^\perp}_Z x)\|} \\ \gamma_Z^{-1}(\Pi_Z x) \\ Z}
\end{array}\]
are continuous, $T_1$ is surjective, $T_2$ is injective, and $T_1 \circ T_2 = \id_{ \{(x,Z):Z \in \cS,\, x \in S^{n-1}\setminus\cR(Z)\} }$.
\end{lemma}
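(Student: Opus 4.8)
The plan is to verify each of the four assertions directly from the explicit formulas, the whole argument resting on the orthogonal splitting $x = \Pi_Z x + \Pi^{^\perp}_Z x$ together with two elementary facts: $\phi_Z$ is a linear isometry onto $\cR(Z)^\perp$ (its matrix $Ort(Z)$ has orthonormal columns), so $\|\phi_Z(v)\| = \|v\|$ and $\|\phi_Z^{-1}(w)\| = \|w\|$ for $w\in\cR(Z)^\perp$; and $\gamma_Z(Z\tr x) = Z(Z\tr Z)^{-1}Z\tr x = \Pi_Z x$, so that $\gamma_Z^{-1}(\Pi_Z x) = Z\tr x$ since $\gamma_Z$ is injective on $\cS$. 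First I would check that the two maps land in the stated codomains. For $T_1$: if $u\in B_Z$, write $u = Z\tr x$ with $x\in S^{n-1}$, so $\gamma_Z(u) = \Pi_Z x$ has norm at most $1$ and the square root is defined; since $\phi_Z(v)\in\cR(Z)^\perp$ and $\gamma_Z(u)\in\cR(Z)$ are orthogonal and $\|\phi_Z(v)\| = 1$, the image vector has squared norm $(1-\|\gamma_Z(u)\|^2)+\|\gamma_Z(u)\|^2 = 1$, so it lies in $S^{n-1}$. For $T_2$: the restriction $x\notin\cR(Z)$ is exactly what makes $\Pi^{^\perp}_Z x\neq 0$, hence $\phi_Z^{-1}(\Pi^{^\perp}_Z x)\neq 0$ and the normalization is legitimate, and $\gamma_Z^{-1}(\Pi_Z x) = Z\tr x\in B_Z$.

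For continuity I would note that on $\cS$ the maps $Z\mapsto Ort(Z)$, $Z\mapsto (Z\tr Z)^{-1}$, $Z\mapsto \Pi_Z$ and $Z\mapsto \Pi^{^\perp}_Z$ are continuous (continuity of $Ort$ being built into its choice, and matrix inversion being continuous on invertible matrices), hence $\phi_Z(v) = Ort(Z)v$, $\phi_Z^{-1}(w) = Ort(Z)\tr w$ and $\gamma_Z(u) = Z(Z\tr Z)^{-1}u$ are jointly continuous in their arguments and in $Z$. Since $t\mapsto\sqrt{t}$ is continuous on $[0,\infty)$ and $w\mapsto w/\|w\|$ is continuous on $\R^{n-r}\setminus\{0\}$, composing these ingredients shows that $T_1$ and $T_2$ are continuous on their respective domains (the exclusion $x\in\cR(Z)$ being precisely what is needed so that the normalization defining $T_2$ stays away from the origin).

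To prove surjectivity of $T_1$ — and, simultaneously, $T_1\circ T_2 = \id$ — given $(x,Z)$ with $Z\in\cS$ and $x\in S^{n-1}$ I would set $u = Z\tr x\in B_Z$, so $\gamma_Z(u) = \Pi_Z x$ and, by the Pythagorean identity $1 = \|\Pi_Z x\|^2 + \|\Pi^{^\perp}_Z x\|^2$, we have $\sqrt{1-\|\gamma_Z(u)\|^2} = \|\Pi^{^\perp}_Z x\|$. If $\Pi^{^\perp}_Z x\neq 0$, take $v = \phi_Z^{-1}(\Pi^{^\perp}_Z x)/\|\phi_Z^{-1}(\Pi^{^\perp}_Z x)\|\in S^{n-r-1}$; using $\|\phi_Z^{-1}(\Pi^{^\perp}_Z x)\| = \|\Pi^{^\perp}_Z x\|$ and the linearity of $\phi_Z$ one gets $\phi_Z(v)\sqrt{1-\|\gamma_Z(u)\|^2} = \Pi^{^\perp}_Z x$, hence $T_1(v,u,Z) = (\Pi^{^\perp}_Z x + \Pi_Z x, Z) = (x,Z)$. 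If $\Pi^{^\perp}_Z x = 0$ the square root vanishes and $T_1(v,u,Z) = (\gamma_Z(u),Z) = (\Pi_Z x,Z) = (x,Z)$ for any $v\in S^{n-r-1}$, which is nonempty because $n\ge r+2$. Specializing the first computation to $(v,u,Z) = T_2(x,Z)$ (legitimate, as then $x\notin\cR(Z)$) yields $T_1(T_2(x,Z)) = (x,Z)$.

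For injectivity of $T_2$, from $T_2(x_1,Z_1) = T_2(x_2,Z_2)$ the last coordinate gives $Z_1 = Z_2 =: Z$; applying $\gamma_Z$ to the equality of the middle coordinates gives $\Pi_Z x_1 = \Pi_Z x_2$; applying $\phi_Z$ to the equality of the first coordinates gives $\Pi^{^\perp}_Z x_1 = \lambda\,\Pi^{^\perp}_Z x_2$ for some $\lambda > 0$. Comparing norms via $1 = \|\Pi_Z x_i\|^2 + \|\Pi^{^\perp}_Z x_i\|^2$ together with $\Pi_Z x_1 = \Pi_Z x_2$ forces $\|\Pi^{^\perp}_Z x_1\| = \|\Pi^{^\perp}_Z x_2\|$ (both nonzero since $x_i\notin\cR(Z)$), hence $\lambda = 1$ and $x_1 = \Pi_Z x_1 + \Pi^{^\perp}_Z x_1 = x_2$. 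Nothing here is deep; the only genuinely delicate points are the continuity of $Ort$ on $\cS$ (which is precisely why the lemma is phrased over $\cS$ rather than over all of $(S^{n-1})^r$) and the bookkeeping that the domain restrictions $\rank Z = r$ and $x\notin\cR(Z)$ are exactly what make every inverse, projection, square root and normalization appearing in the two formulas well defined.
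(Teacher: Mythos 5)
Your proof is correct and follows essentially the same route as the paper, which simply asserts these verifications as ``straightforward calculations'' based on the orthogonal splitting $x = \Pi_Z x + \Pi^{^\perp}_Z x$ and the isometry/continuity properties of $\phi_Z$ and $\gamma_Z$; you have merely carried out the calculations in full. If anything, your injectivity argument is more complete than the paper's one-line justification, since you correctly account for the fact that $T_2$ only records the \emph{normalized} perpendicular component and recover its norm from the constraint $\|\Pi_Z x\|^2 + \|\Pi^{^\perp}_Z x\|^2 = 1$.
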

\begin{proof}
Continuity follows from the continuity of $\gamma_Z$, $\phi_Z$ and their inverses. It is a straightforward calculation to check that $T_1$ is a surjection and  $T_1 \circ T_2 = \id_{ \{(x,Z):Z \in \cS,\, x \in S^{n-1}\setminus\cR(Z)\} }$.
$T_2$ is injective since for any $x_1,x_2\in \R^n$, if $x_1\neq x_2$ then either $\Pi_Z x_1\neq \Pi_Zx_2$ or $\Pi^{^\perp}_Z x_1 \neq \Pi^{^\perp}_Z x_2$.
\end{proof}
\begin{proof}[Proof of Theorem~\ref{thm:PSD_multivKissing2} ] First, denote
\[\B := \lrc{(u,Z):Z \in \cS,\, u\in B_Z}=\lrc{(Z \tr x,Z):Z \in \cS,\, x\in S^{n-1}}, \]
and consider the compact projection subbundle
$f:\B \to \cS.$  Let $K$ be a kernel  on  $\pi_{ S^{n-1},(S^{n-1})^r}$. Throughout the proof we use a function $L$ on the cylinder  $\cyl{S^{n-r-1}}{f}$ defined by
\begin{align}
L_Z\lrp{\smat{u_1 \\v_1},\smat{u_2\\v_2}}=K\lrp{T_1\lrp{\smat{u_1 \\v_1\\Z}},T_1\lrp{\smat{u_2\\v_2\\Z}}}, \label{def:L}
\end{align}
for all $v_1,v_2\in S^{n-r-1}$, $Z\in \cS$, $u_1,u_2\in  B_Z$. By our assumptions $K$ is a kernel on $\pi_{ S^{n-1},(S^{n-1})^r}$. Since $T_1$ is surjective by Lemma~\ref{lem:T1T2},  we have that $L$ is a kernel on $\cyl{S^{n-r-1}}{f}$ by Lemma~\ref{lem:ContinMaps}.

For the ``only if" direction of the theorem, let $K$ be a kernel on  $\pi_{ S^{n-1},(S^{n-1})^r}$ that has expansion \eqref{eq:expJacGenKissing2}. Then $K$  is invariant under the action of $O_n$  as it is continuous and depends only on inner products of $x,y,z_1,\dots,z_r$ on a dense subset of its domain.  To show that $K$ is p.d., notice that   expansion~\eqref{eq:expJacGenKissing2} of $K$ implies  $L$ has an expansion of type~\eqref{eq:expBasic}. Hence $L$ is a p.d. kernel on $\cyl{S^{n-r-1}}{f}$ by Theorem~\ref{thm:leftFixed}. Therefore $K$ is p.d. when restricted to a kernel on $\pi_{S^{n-1},\cS}$ by Lemma~\ref{lem:ContinMaps}, and thus $K$ is p.d. by continuity.

For the ``if" direction of the theorem, let  $K$ be  p.d. on the projection bundle $\pi_{ S^{n-1},(S^{n-1})^r}$ and invariant under the action of $O_n$. That is, $K_{MZ}(Mx,My)=K_Z(x,y)$ for all $M\in O_n, x,y\in S^{n-1}$ and $Z\in (S^{n-1})^r$. Then for any $Z\in(S^{n-1})^r$,  $K_Z$ is invariant under the action of $\Stab_{O_n}(Z)$. Using Lemma~\ref{lem:ContinMaps}, we obtain that $L$ is a p.d. kernel on $\cyl{S^{n-r-1}}{f}$. As $\Stab_{O_n}(Z)$ fixes $\Pi_Z S^{n-1}$ and acts transitively on $S^{n-1} \cap \cR(Z)^\perp$, we have that   $L$ is invariant under the horizontal action of $O_{n-r}$ on $\cyl{S^{n-r-1}}{f}$.
From Theorem~\ref{thm:leftFixed} there are p.d. kernels $d_i$ on $f$, $i=0,1,\dots$ such that for all $v_1,v_2 \in S^{n-r-1}$, all $Z \in \cS$ and $u_1,u_2 \in B_Z$
\begin{align}
L_Z\lrp{\smat{u_1 \\v_1},\smat{u_2\\v_2}}=\sum_{i \ge 0} (d_i)_Z(u_1,u_2)P^{\tfrac{n-r}{2}{-}1}_i  (v_1 \tr v_2 ). \label{eq:expL}
\end{align}
Now, $T_1 \circ T_2 = \id_{ \{(x,Z):Z \in \cS,\, x \in S^{n-1}\setminus\cR(Z)\} }$ from Lemma~\ref{lem:T1T2}. Thus for any $Z \in \cS$ and $x_1,x_2 \in S^{n-1}\setminus \cR(Z)$, we have
\begin{align}
\hspace{-0.7cm} K_Z\lrp{x_1,x_2} & = L_Z\lrp{T_2\lrp{\smat{x_1\\Z}},T_2\lrp{\smat{x_2\\Z}}} \nonumber \\
& = \sum_{i \ge 0} (d_i)_Z\lrp{ \gamma_Z^{-1}(\Pi_Z x_1), \gamma_Z^{-1}(\Pi_Z x_2)} P^{\tfrac{n-r}{2}{-}1}_i \left(\lrp{\tfrac{\phi_Z^{-1}(\Pi^{^\perp}_Z x_1) }{\|\Pi^{^\perp}_Z x_1\|}} \tr  \tfrac{\phi_Z^{-1}(\Pi^{^\perp}_Z x_2) }{\|\Pi^{^\perp}_Z x_2\|}\right) \nonumber \\
&= \sum_{i \ge 0}(d_i)_Z(Z\tr x_1,Z\tr x_2)
P^{\tfrac{n-r}{2}{-}1}_i \left( \tfrac{\lrp{\Pi^{^\perp}_Z x_1}\tr\Pi^{^\perp}_Z x_2}{\|\Pi^{^\perp}_Z x_1\|\|\Pi^{^\perp}_Z x_2\|}\right), \label{eq:Fdraft}
\end{align}
where we have used
$\phi_Z^{-1}(a_1) \tr \phi_Z^{-1}(a_2) = a_1 \tr a_2$.

To finish, we specify the form of the $d_i$'s. Let $k \ge 0$. By Lemma~\ref{lem:ContinMaps} we have that $d_k$ is a p.d. kernel on $f:\B \to \cS$ . Notice that $O_n$ acts vertically on $f$ since for every $M \in O_n$, $x \in S^{n-1}$ and $Z \in \cS$, we have $(Z\tr x,Z)^M = (Z \tr x,MZ)$.  Next we show that $d_k$ is invariant under this vertical action. That is, that for all $Z\in \cS$, $v_1,v_2\in B_Z$ and $M\in O_n$ we have
$(d_k)_{Z}(v_1,v_2) = (d_k)_{MZ}(v_1,v_2)$. By~\eqref{eq:ck} from Theorem~\ref{thm:leftFixed},
\begin{align*}
(d_k)_{MZ}(v_1,v_2) & =\tfrac{1}{p_{(n-r)/2{-}1,k}}\int_{u_1,u_2 \in S^{n-r-1}} L_{MZ}\lrp{\smat{u_1\\v_1},\smat{u_2\\v_2}}P^{\tfrac{n-r}{2}{-}1}_k( v_1\tr v_2) d\omega(v_1)d\omega(v_2).
\end{align*}
Therefore it is enough to show that $L$ is invariant under the vertical action of $O_n$ on $\cyl{S^{n-r-1}}{f}$. By construction of $\phi_Z, \gamma_Z$
 we have
\[\phi_{MZ}(v)=M \phi_{Z}(v), \ \gamma_{MZ}(u)=M \gamma_{Z}(u), \text{ and } \|\gamma_{MZ}(u)\|=\|\gamma_{Z}(u)\|.\]
Therefore
\begin{align*}
L_{MZ}\lrp{\smat{u_1\\v_1},\smat{u_2\\v_2}} & =K\lrp{T_1 \hspace{-0.1cm}\lrp{\smat{u_1 \\v_1\\ MZ}},T_1 \hspace{-0.1cm}\lrp{\smat{u_2\\v_2\\ MZ}}}\\
 &=K\lrp{\smat{M&0 \\0& M}T_1 \hspace{-0.1cm}\lrp{\smat{u_1 \\v_1\\ Z}},\smat{M&0 \\0& M} T_1 \hspace{-0.1cm}\lrp{\smat{u_2\\v_2\\ Z}}}=L_{Z}\lrp{\smat{u_1\\v_1},\smat{u_2\\v_2}},
\end{align*}
where the last but one equality holds by invariance of $K$ under $O_n$. Now, we can reduce ourselves to the orbit bundle of $f$ under the vertical action of $O_n$.
We have that $ \orb_{O_n}(\cS)=\interior \Lambda^{r}$,  $\orb_{O_n}(\B)= \lrc{\smat{1&y\tr\\y&Y}\in \Lambda^{r+1}: Y\succ 0}$ and $\orb_{O_n} (f)$ is a projection. From Proposition~\ref{prop:vertical} there is a p.d. kernel  $c_i$ on $\orb_{O_n}(f): \lrc{\smat{1&y\tr\\y&Y}\in \Lambda^{r+1}: Y\succ 0} \to \lrc{Y\in \Lambda^{r}: Y\succ 0} $ such that  for all $Z\in \cS$, $v_1,v_2\in B_Z$ we have that
$(c_k)_{Z\tr Z}(v_1,v_2)= (d_k)_Z(v_1,v_2)$.
\end{proof}

\section{Further observations} \label{sec:conclusionKissing2}

One question for further research is what shape the coefficients $c_i$ from expansion~\eqref{eq:expJacGenKissing2} could have. We show one result in this direction: a Gegenbauer polynomial of order $\lrp{\tfrac{n-r}{2}{-}1}$ used in~\eqref{eq:expJacGenKissing2} on $\pi_{S^{n-1},(S^{n-1})^{r}}$ can be considered as a p.d. kernel  on $\pi_{S^{n-1},(S^{n-1})^{r+1}}$, and therefore it can be expressed in a series of the form~\eqref{eq:expJacGenKissing2} with coefficients $c_i$ of a particular but  rather complex form. To simplify the notation, given $x,y\in S^{n-1}$ and $Z\in (S^{n-1})^r$, define
 \[\ip{x}{y}_Z := (\Pi^{^\perp}_Zx)\tr \Pi^{^\perp}_Zy = x\tr y-(Z \tr x)\tr (Z\tr Z)^{-1} Z \tr y.\]
\begin{proposition} \label{prop:addition}
Let $r\in \N$,  $x,y,q\in S^{n-1}$ and $ Z\in (S^{n-1})^r$, then
\begin{align*}
\hspace{-0.9cm}
 P^{\tfrac{n-r}{2}-1}_k\hspace{-0.15cm} \lrp{\tfrac{\ip{x}{y}_Z}{\sqrt{\ip{x}{x}_Z}\sqrt{\ip{y}{y}_Z}}}\hspace{-0.05cm}
&= \sum_{i=0}^k \hspace{-0.05cm} c_{k,i}^{n,r}
\lrp{\tfrac{\ip{x}{x}_{[Z q]}}{\ip{x}{x}_Z}}^{i/2} \hspace{-0.1cm}\lrp{\tfrac{\ip{y}{y}_{[Z q]}}{\ip{y}{y}_Z}}^{i/2}\hspace{-0.1cm} P^{\tfrac{n-r}{2}-\frac{3}{2}}_i\hspace{-0.1cm}\lrp{\hspace{-0.05cm}\tfrac{\ip{x}{y}_{[Z q]}}{\sqrt{\ip{x}{x}_{[Z q]}}\sqrt{\ip{y}{y}_{[Z q]}}}\hspace{-0.05cm}} \\
& \quad \ \cdot  P^{\tfrac{n-r}{2}+i-1}_{k-i}\lrp{\tfrac{\ip{x}{q}_Z}{\sqrt{\ip{x}{x}_Z}\sqrt{\ip{q}{q}_Z}}}
 P^{\tfrac{n-r}{2}+i-1}_{k-i}\lrp{\tfrac{\ip{y}{q}_Z}{\sqrt{\ip{y}{y}_Z}\sqrt{\ip{q}{q}_Z}}}
\end{align*}
where $c_{k,i}^{n,r}$ are positive constants.
\end{proposition}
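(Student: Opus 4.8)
The plan is to recognize Proposition~\ref{prop:addition} as a special instance of Theorem~\ref{thm:PSD_multivKissing2} combined with the classical addition formula for Gegenbauer polynomials. The key observation is that the left-hand side, viewed as a function of $x,y$ with $Z$ fixed, is precisely a Gegenbauer polynomial of order $\tfrac{n-r}{2}-1$ in the normalized projected inner product $\tfrac{\ip{x}{y}_Z}{\sqrt{\ip{x}{x}_Z}\sqrt{\ip{y}{y}_Z}}$, which is exactly the core of the building block appearing in~\eqref{eq:expJacGenKissing2}. Hence $P^{\tfrac{n-r}{2}-1}_k\lrp{\tfrac{\ip{x}{y}_Z}{\sqrt{\ip{x}{x}_Z}\sqrt{\ip{y}{y}_Z}}}$ is a p.d. kernel on $\pi_{S^{n-1},(S^{n-1})^r}$ invariant under $O_n$ (positive definiteness follows since each such Gegenbauer summand is p.d. by Proposition~\ref{thm:PSD_univKissing2} after the reduction in the proof of Theorem~\ref{thm:PSD_multivKissing2}, or directly from Proposition~\ref{prop:psd}). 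Appending one more fixed point $q$ and regarding the same expression as a kernel on $\pi_{S^{n-1},(S^{n-1})^{r+1}}$ (with base point $[Z\,q]$), Theorem~\ref{thm:PSD_multivKissing2} guarantees an expansion of the form~\eqref{eq:expJacGenKissing2} in the variable $\tfrac{\ip{x}{y}_{[Z q]}}{\sqrt{\ip{x}{x}_{[Z q]}}\sqrt{\ip{y}{y}_{[Z q]}}}$, which is the Gegenbauer order $\tfrac{n-r}{2}-\tfrac32 = \tfrac{n-(r+1)}{2}-1$ appearing in the claimed identity.

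First I would set up the geometry: fix $Z\in\cS$ and $q\in S^{n-1}$ so that $[Z\,q]$ has rank $r+1$, and work in the orthogonal complement $\cR(Z)^\perp \cong \R^{n-r}$, writing the unit vectors $\bar x = \Pi^\perp_Z x/\|\Pi^\perp_Z x\|$, $\bar y = \Pi^\perp_Z y/\|\Pi^\perp_Z y\|$ and $\bar q = \Pi^\perp_Z q/\|\Pi^\perp_Z q\|$ in $S^{n-r-1}$. Under this identification the left-hand side is $P^{\tfrac{n-r}{2}-1}_k(\bar x\tr \bar y)$, and the quantities $\tfrac{\ip{x}{y}_{[Z q]}}{\sqrt{\ip{x}{x}_{[Z q]}}\sqrt{\ip{y}{y}_{[Z q]}}}$, $\tfrac{\ip{x}{q}_Z}{\sqrt{\ip{x}{x}_Z}\sqrt{\ip{q}{q}_Z}}$ become respectively the normalized inner product of $\bar x,\bar y$ projected away from $\bar q$, and $\bar x\tr\bar q$; moreover $\tfrac{\ip{x}{x}_{[Z q]}}{\ip{x}{x}_Z} = 1-(\bar x\tr\bar q)^2 = \|\Pi^\perp_{\bar q}\bar x\|^2$. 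Thus the entire identity reduces to the classical Gegenbauer addition theorem in dimension $n-r$: expanding $P^{\tfrac{n-r}{2}-1}_k(\bar x\tr\bar y)$ by separating the $\bar q$-component of $\bar x$ and $\bar y$, which is exactly the addition formula (see, e.g., \citep{Addition} or \citep[Chapter 9]{Andrews}) and produces the triple product of a lower-order Gegenbauer polynomial in the doubly-projected inner product, times two factors $P^{\tfrac{n-r}{2}+i-1}_{k-i}$ of the $\bar q$-inner products, with positive constants $c^{n,r}_{k,i}$.

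The remaining work is bookkeeping: I would verify the algebraic identities relating the $\ip{\cdot}{\cdot}_Z$ and $\ip{\cdot}{\cdot}_{[Z q]}$ bilinear forms to the projected unit vectors $\bar x,\bar y,\bar q$ (these follow from $\ip{x}{y}_{[Z q]} = \ip{x}{y}_Z - \tfrac{\ip{x}{q}_Z\ip{q}{y}_Z}{\ip{q}{q}_Z}$, a Schur-complement/nested-projection computation), confirm that the normalization factors $\lrp{\tfrac{\ip{x}{x}_{[Z q]}}{\ip{x}{x}_Z}}^{i/2}$ match the $(1-(\bar x\tr\bar q)^2)^{i/2}$ weights in the addition theorem, and cite the positivity of $c^{n,r}_{k,i}$ from the standard form of the addition formula. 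Everything then holds on the dense set $\cS$ where all ranks are maximal and all denominators are nonzero, and extends to all of $(S^{n-1})^{r+1}$ by continuity of both sides. The main obstacle is purely notational rather than conceptual: matching the paper's $\ip{\cdot}{\cdot}_Z$-notation and the iterated-projection normalizations to the precise statement of the classical addition theorem, and being careful that the Gegenbauer orders track correctly ($\tfrac{n-r}{2}-1 \to \tfrac{n-r}{2}-\tfrac32$ for the innermost factor and $\to \tfrac{n-r}{2}+i-1$ for the outer factors), so that no spurious constants or sign errors creep in.
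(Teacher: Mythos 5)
Your proposal is correct and follows essentially the same route as the paper: both reduce the identity to the classical addition theorem for Gegenbauer polynomials of order $\alpha=\tfrac{n-r}{2}-1$, via the substitutions $\cos\theta = \tfrac{\ip{x}{q}_Z}{\sqrt{\ip{x}{x}_Z}\sqrt{\ip{q}{q}_Z}}$, $\sin\theta=\sqrt{\ip{x}{x}_{[Zq]}/\ip{x}{x}_Z}$, etc., together with the nested-projection (Schur-complement) identity relating $\ip{\cdot}{\cdot}_{[Zq]}$ to $\ip{\cdot}{\cdot}_Z$. The opening paragraph invoking Theorem~\ref{thm:PSD_multivKissing2} is dispensable motivation, but the actual argument in your second and third paragraphs matches the paper's proof.
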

\begin{proof} We use the addition theorem for Gegenbauer polynomials.  Let $\alpha>0$ and $k\in \N$, then for angles $\gamma, \theta, \tau$
\begin{align*}
\hspace{-0.9cm} P^{\alpha}_k(\cos\theta\cos\tau & +\sin\theta\sin \tau  \cos \gamma)\\
&=\sum_{i=0}^k c_{k,i}^\alpha \lrp{\sin\theta}^i \lrp{\sin\tau}^i P^{\alpha-\frac{1}{2}}_i(\cos\gamma) P^{\alpha+i}_{k-i}(\cos\theta) P^{\alpha+i}_{k-i}(\cos\tau),
\end{align*}
where $c_{k,i}^\alpha$ are some positive constants that depend on $\alpha,k,i$. More details about the formula can be found in~\cite{Addition}. Define
\begin{align*}
\cos\gamma  & = \tfrac{\ip{x}{y}_{[Z q]}}{\sqrt{\ip{x}{x}_{[Z q]}}\sqrt{\ip{y}{y}_{[Z q]}}}\\
\cos\theta  & = \tfrac{\ip{x}{q}_Z}{\sqrt{\ip{x}{x}_Z}\sqrt{\ip{q}{q}_Z}}, \ \sin\theta=\sqrt{1-\cos^2\theta},\\
 \cos\tau & =\tfrac{\ip{y}{q}_Z}{\sqrt{\ip{y}{y}_Z}\sqrt{\ip{q}{q}_Z}},\ \sin \tau =\sqrt{1-\cos^2\tau}.
\end{align*}
Now, expanding the expressions for the sines above and using the inverse formula for block matrices, one can show that
\begin{align*}
\sin\theta& =\sqrt{\tfrac{\ip{x}{x}_{[Z q]}}{\ip{x}{x}_Z}} \hspace{4.7cm}
\end{align*}
and
\begin{align*}
\cos\theta\cos\tau+\sin\theta\sin \tau \cos\gamma& =\tfrac{\ip{x}{y}_Z}{\sqrt{\ip{x}{x}_Z}\sqrt{\ip{y}{y}_Z}}. \hspace{7.8cm}
\end{align*}
Substituting the corresponding expressions and orders of Gegenbauer polynomials in the addition formula with $\alpha=\tfrac{n-r}{2}-1$, the result follows.
\end{proof}
\noindent The proof of Proposition~\ref{prop:addition} is based on the addition theorem for Gegenbauer polynomials~\cite{Addition} and is  related to the approach in~\citet{MultPsd}. Namely, \citet{MultPsd}  modifies the addition theorem for Gegenbauer polynomials to characterize p.d. kernels on $S^{n-1}$ invariant under the action of $\Stab_{O_n}(Z)$ for a given $Z\in (S^{n-1})^r$.

\bibliographystyle{plainnat}
\bibliography{references_kissing1}

\end{document}